\documentclass{article}
\usepackage{float}
\usepackage[utf8]{inputenc}
\usepackage{csquotes}
\usepackage{amsmath, amsfonts, amssymb}
\usepackage{amsthm}
\usepackage[T2A]{fontenc}
\usepackage[english]{babel}

\usepackage[
	style		= alphabetic,
	sorting		= ynt,
	backend		= biber,
	autolang	= other]{biblatex}
\bibliography{biblio.bib}

\usepackage{hyperref}
\hypersetup{
    colorlinks=true,
    linkcolor=blue,
    filecolor=magenta,      
    urlcolor=cyan,
}

\usepackage{graphicx} 

\newtheorem{theorem}{Theorem}[section]

\newtheorem{corollary}{Corollary}[theorem]

\newtheorem{lemma}[theorem]{Lemma}  	
\newtheorem{proposition}[theorem]{Proposition}	
\theoremstyle{definition}

\newtheorem{example}[theorem]{Example}

\theoremstyle{definition}
\newtheorem{definition}{Definition}

\numberwithin{figure}{section}
\makeatletter
\@addtoreset{figure}{section}
\makeatother

\title{Interpolations: a linear algebra approach}
\author{Andronick A. Arutyunov\footnote{Corrsesponding author: andronick.arutyunov@gmail.com, Russia, Moscow, Trapeznikov Institute of Control Science; Uzbekistan, Samarkan, Samarkand State University} \and Nikolay A. Korgin\footnote{nkorgin@ipu.ru, Russia Moscow, Trapeznikov Institute of Control Science}}
\date{}

\begin{document}

\maketitle

\begin{abstract}
	This paper presents a unified linear-algebraic framework for interpolation problems, viewing different standart problems (Lagrange polynomials, Hermite splines, and trigonometric interpolation) with the help of dual spaces. By representing interpolation conditions as linear functionals in the dual of a finite-dimensional function space, we establish a general existence and uniqueness theorem.
	
	We derive explicit formulas for basis functions dual to given interpolation schemes, enabling efficient construction of interpolants as linear combinations of these bases. 
	
	The approach is extended to trigonometric interpolation, where we provide closed-form expressions for dual bases and analyze degeneracy conditions. We also address practical scenarios involving heterogeneous sensors, where function values and derivatives are specified at distinct points, and propose a corresponding interpolation scheme. The method is systematic, dimension-aware, and decouples the analysis of interpolation conditions from the choice of basis functions. 
	
	We cosider examples from kinematics and periodic function interpolation demonstrate the versatility of the approach, suggesting applications in trajectory planning, signal processing, and beyond.
	
	\textbf{Keywords}: interpolation, dual space, Hermite splines, trigonometric polynomials, pointwise interpolation scheme, linear algebra, sensor fusion.
	
	\textbf{Authors:} Corrsesponding author: Andronick A. Arutyuniv, Trapeznikov Institute of Control Science  andronick.arutyunov@gmail.com; 
							 Nikolay Korgin, Trapeznikov Institute of Control Science.
\end{abstract}

\section{Introduction}

In the course of discussing applied problems related to reconstructing the trajectory of a moving cart (see \cite{2024-ot,MakarovMorozov2025}), authors observed that interpolation methods are often perceived as a collection of disconnected formulas. At the same time, practical considerations require switching between different interpolation schemes depending on the type of motion. Moreover, the available data frequently must be preprocessed in a manner tailored to a chosen interpolation method, which requires additional effort and may introduce an uncontrolled loss of accuracy.

Many of these issues can be resolved by viewing different interpolation schemes in a unified way. To this end, we propose a general framework for interpolation based on elementary tools of linear algebra, in particular on the use of the dual space. This perspective makes it possible to regard, for example, the Lagrange polynomial and splines—that is, piecewise polynomial functions with prescribed boundary conditions—as manifestations of a single underlying construction.

Within this framework, it becomes irrelevant which family of functions is used for interpolation, whether polynomial or trigonometric. Likewise, it does not matter which specific scheme is applied: one based on many evaluation points (as in the Lagrange interpolation polynomial), or one defined on a segment with prescribed boundary conditions (as in Hermite splines).

From the standpoint of applied problems—meaning situations (see standard books like \cite{Boor,Schumaker_2007}) one has to construct an interpolating function from available data—we suggest first analyzing the data, that is, determining what is actually known, and then selecting the interpolation scheme that best fits this information. For instance, one may know the values of the function at some points, its first derivative (velocity) at others, and its second derivative (acceleration) at yet others.

The approach we propose to this problem is often mentioned in the relevant literature. However, it is not used for a systematic approach to the study of interpolation problems. At the same time, this approach is quite natural from the point of view of linear algebra. We note paper \cite{Makarov2017-gm}, in which this method was used to study B-splines and paper \cite{Livshits2024-sz} where the concept of minimal splines was researched.

\subsubsection*{Plan of the Paper}

For the sake of clarity in working with interpolation problems, we introduce the notion of a \emph{pointwise interpolation scheme} (Section~\ref{sec-scheme}). The general principle for solving interpolation problems determined by a given scheme (Section~\ref{sec-generalschemeproperties}) is that the dimension of the space from which interpolating functions are chosen equals the number of data constraints (Theorem~\ref{th-schemegeneral}). In the case of polynomial interpolation, this yields the minimal degree of a polynomial that is both necessary and sufficient.

The required mathematical background related to dual bases is introduced in Section~\ref{sec-dualbasis}. As the main example of interpolation schemes, we consider Hermite splines. General properties of spline interpolation and a detailed description of the corresponding scheme are given in Section~\ref{sec-splines}. A thorough derivation of the standard spline formulas (linear, cubic, and quintic) is presented in Section~\ref{sec-basispol}.

We derive a formula for computing the dual basis in full generality (Theorem~\ref{th-basiscalc}). Using this formula, we examine an analogue of Hermite splines in which ordinary polynomials are replaced by trigonometric polynomials; this type of interpolation is discussed in Section~\ref{sec-trigpoly}. Another well-known variant of trigonometric interpolation, analogous to the Lagrange polynomial, is presented in Section~\ref{sec-zou25}. Finally, we propose an interpolation scheme suitable for situations involving heterogeneous sensors (Section~\ref{sec-diffdatchyk}), where the values of the function, its first derivative, and its second derivative are provided at different points.

\subsection{Interpolation Problem}

We recall the general notion of an interpolation problem and fix the notation and terminology.

\begin{definition}
A (finite or infinite, depending on the context) sequence of points $\{t_i\}$ such that $t_i < t_{i+1}$ will be called a \emph{grid}.
\end{definition}

The interpolation problem consists in finding a function belonging to a prescribed class (for example, polynomials or piecewise polynomial functions) and satisfying boundary conditions on the grid.

As a first example, we recall the Lagrange interpolation polynomial.

\begin{example}\label{ex-Lagr}
Given a finite grid $\{t_0, \ldots, t_n\}$ and values $\alpha_0, \ldots, \alpha_n$, there exists a unique interpolating polynomial $L(t)$ of degree at most $n$ (that is, in the space $\mathbb{R}_n[t]$) such that
\[
    L(t_0) = \alpha_0,\; L(t_1) = \alpha_1,\; \ldots,\; L(t_n) = \alpha_n.
\]
\end{example}

Here the value of the function is prescribed at each grid point, and the class of admissible interpolants is the space of polynomials of bounded degree.

\smallskip

Another basic example of piecewise interpolation is piecewise linear interpolation.

\begin{example}\label{ex-lin-int}
Given a grid $\{t_i\}$ and values $\alpha_i$, there exists a unique continuous, piecewise linear function $p(\cdot)$ (that is, a function which is linear on each interval $[t_i, t_{i+1}]$) such that
\[
    p(t_i) = \alpha_i,\qquad \forall i.
\]
\end{example}

Here the grid may be infinite, and a value is prescribed at each node. The interpolant is constructed by simply connecting the given points by straight segments.

\smallskip

A natural generalization of piecewise linear interpolation is interpolation by splines, that is, by piecewise polynomial functions. We are primarily interested in the case of Hermite splines.

\begin{example}\label{ex-ermit}
Given a grid $\{t_i\}$, a Hermite spline $p(t)$ is a function such that on each interval $[t_i, t_{i+1}]$ the function $p(t)$ coincides with a polynomial of minimal possible degree. Moreover, for prescribed parameters $\alpha_i^j$, $j = 0,\ldots,n$, one has
\[
    p(t_i) = \alpha_i^0,\qquad
    p'(t_i) = \alpha_i^1,\qquad
    \ldots,\qquad
    p^{(n)}(t_i) = \alpha_i^n.
\]
\end{example}

We denote by $(k)$ the corresponding derivative; for $k=0$ this simply means the value of the function.

\bigskip

To construct a Hermite spline, it is sufficient to solve the interpolation problem on a fixed interval $[t_1, t_2]$. Namely, we must find a polynomial
\[
    p_{t_1, t_2;\,\alpha_i^1,\,\alpha_j^2,\, j = 1..n}(\cdot)
\]
such that the boundary conditions
\begin{equation}\label{eq-bordercond}
    p_{t_1, t_2;\,\alpha_i^1,\,\alpha_j^2,\, j = 1..n}^{(k)}(t_i)
    = \alpha_k^i,\qquad i = 1,2,\; k = 0..n
\end{equation}
are satisfied.

Then the desired function $p(t)$ on the interval $[t_1, t_2]$ coincides with
\[
    p_{t_1, t_2;\,\alpha_i^1,\,\alpha_j^2,\, j = 1..n}(t).
\]
When the interval $[t_1, t_2]$ and the boundary data are clear from the context, we shall use the abbreviated notation
\begin{equation}
    p_{t_1, t_2;\,\alpha_i^1,\,\alpha_j^2,\, j = 1..n}(t) \;=:\; p_*(t).
\end{equation}

\subsection{Interpolation Scheme}\label{sec-scheme}

As noted above, the interpolation problem in the case of Hermite splines reduces to a grid consisting of two points, that is, to finding a polynomial that satisfies boundary conditions of the required order on a given interval.

For convenience and brevity, we introduce our own notation for interpolation schemes, which explicitly takes into account the given data.

Let $\{t_i\}$, $i = 1..N$, be a collection of real numbers (not necessarily a grid). We require that every function in the space~$F$ be $s$ times continuously differentiable on the interval containing the points $\{t_i\}$, that is,
\[
    F \subset C^{s}\bigl[\min_{i \in 1..N}\{t_i\},\; \max_{i \in 1..N}\{t_i\}\bigr].
\]
This condition is necessary to ensure that the boundary conditions are well-defined.

\begin{definition}\label{def-interpscheme}
A \emph{pointwise interpolation scheme of order~$s$} of type
\[
    \{t_1,\ldots, t_{k_1}\mid t_{k_1+1},\ldots, t_{k_2}\mid \ldots \mid t_{k_s+1},\ldots, t_N \mid F\}
\]
is the problem of finding, in the class of functions $F$, an interpolating function $p_*(t)$ such that for prescribed parameters $\alpha_i$, $i = 1..N$, the following conditions are satisfied:
\begin{align*}
    &p_*(t_1) = \alpha_1,\; p_*(t_2) = \alpha_2,\; \ldots,\; p_*(t_{k_1}) = \alpha_{k_1};\\[2pt]
    &p_*'(t_{k_1+1}) = \alpha_{k_1+1},\; \ldots,\; p_*'(t_{k_2}) = \alpha_{k_2};\\[2pt]
    &p_*^{(s)}(t_{k_s+1}) = \alpha_{k_s+1},\; \ldots,\; p_*^{(s)}(t_{N}) = \alpha_{N}.
\end{align*}
Here $s \ge 0$ and $1 < k_1 \le \cdots \le k_s \le N$.
\end{definition}

As the class of functions $F$ we shall consider either the space of polynomials $\mathbb{R}[t]$ or the space of trigonometric polynomials $\mathbb{T}[t]$. Of course, other classes may be used as well, such as exponential families (see, e.g., \cite{MccNASA,Massopust2021-tv}).

The interpolation problems described earlier take the following form in this framework:
\begin{enumerate}
    \item The Lagrange interpolation polynomial (Example~\ref{ex-Lagr}) corresponds to a pointwise interpolation scheme of order zero in the class $\mathbb{R}_n[t]$:
    \[
        \{t_1,\ldots,t_n \mid \mathbb{R}_n[t]\}.
    \]

    \item Piecewise linear interpolation (Example~\ref{ex-lin-int}) is a pointwise interpolation scheme of order zero in the class of linear polynomials:
    \[
        \{t_1, t_2 \mid \mathbb{R}_1[t]\}.
    \]

    \item Cubic Hermite splines (Example~\ref{ex-ermit}, $n=1$) correspond to a pointwise interpolation scheme of order one:
    \[
        \{t_1, t_2 \mid t_1, t_2  \mid \mathbb{R}_3[t]\}.
    \]

    \item In general, Hermite splines (Example~\ref{ex-ermit}, arbitrary $n$), as we shall show later (Theorem~\ref{th-Rnt1t2}), correspond to the pointwise interpolation scheme
    \[
        \{\underbrace{t_1, t_2 \mid t_1, t_2 \mid \cdots \mid t_1, t_2}_{n \text{ times}} \mid \mathbb{R}_{2n+1}[t]\}.
    \]
\end{enumerate}

\subsection{Main Results and Approach}

The interpolation problems described above admit a unified viewpoint based on linear algebra, which not only allows one to analyze the existence of solutions, but also yields parametric formulas suitable for efficient computation in applications.

The idea is as follows. The space of functions in which the interpolant is sought is a finite-dimensional vector space. The conditions that the function must satisfy (for example, the boundary conditions in~\eqref{eq-bordercond}) can be regarded as linear functionals, that is, as elements of the dual space.

For finite-dimensional spaces there is a canonical isomorphism between a space and its dual, under which a basis of the original space corresponds to the dual basis of the dual space. By fixing a basis in the dual space associated with the boundary conditions and the corresponding basis in the original space, we obtain the following simple method for constructing the interpolating function: it suffices to take a linear com\-bination of the  basis elements with coefficients equal to the boun\-dary conditions.

More precisely, for the Hermite spline interpolation problem this means the following.

Let $t_1 \neq t_2 \in \mathbb{R}$. For a fixed integer $n$, and for an arbitrary set of real numbers
\[
    \{\alpha_{1,0}, \alpha_{1,1}, \ldots, \alpha_{1,n};\; \alpha_{2,0}, \alpha_{2,1}, \ldots, \alpha_{2,n}\},
\]
there exists a unique polynomial $p_*(\cdot) \in \mathbb{R}_{2n+1}[t]$ such that
\begin{align*}
    p_*(t_1) &= \alpha_{1,0},\qquad p_*(t_2) = \alpha_{2,0},\\
    p_*'(t_1) &= \alpha_{1,1},\qquad p_*'(t_2) = \alpha_{2,1},\\
    &\;\;\vdots\\
    p_*^{(n)}(t_1) &= \alpha_{1,n},\qquad p_*^{(n)}(t_2) = \alpha_{2,n}.
\end{align*}

\section{Dual Bases}\label{sec-dualbasis}

All notions from linear algebra used below are standard and may be found in any textbook, for example \cite{Vinberg2003}, which we shall primarily follow. For completeness, we recall the definition of the dual space.

\begin{definition}
Given a finite-dimensional vector space $V$, its \emph{dual space} $V^*$ is the space of linear functionals, that is, linear maps $\varphi: V \to \mathbb{R}$:
\begin{equation}
    V^* := \{\varphi: V \to \mathbb{R} \mid
    \varphi(\alpha x + \beta y) = \alpha\,\varphi(x) + \beta\,\varphi(y),\;
    \forall x,y \in V,\; \alpha,\beta \in \mathbb{R}\}.
\end{equation}
\end{definition}

In what follows, we will mostly work with polynomial spaces of bounded degree:
\[
    \mathbb{R}_n[t] := \{p(\cdot) \in \mathbb{R}[t] \mid \deg(p) \le n\}.
\]

We now present several examples of linear functionals on spaces of polynomials that will be useful later.

\begin{example}\label{ex-delta}
The map $\delta_x^k : p(\cdot) \mapsto p^{(k)}(x)$ is a linear functional on the space of polynomials.
\end{example}

In physics, such linear functionals are often referred to as $\delta$-functions.

\begin{example}\label{ex-int}
The map $s: \mathbb{R}_n[t] \to \mathbb{R}$ defined by
\[
    s: p(\cdot) \mapsto \int_a^b p(t)\,dt
\]
is a linear functional.
\end{example}

\subsection{Dual Bases}

The method we propose for constructing basis polynomials is based on the following statement.

\begin{lemma}\label{lemmaV=V*}
The dimension of a finite-dimensional vector space and that of its dual space coincide:
\[
    \dim(V^*) = \dim(V).
\]
\end{lemma}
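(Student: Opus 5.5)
The plan is to construct the dual basis explicitly. Fix a basis $e_1,\ldots,e_n$ of $V$, where $n=\dim V$. For each $i$ we define a functional $e_i^*\in V^*$ by prescribing its values on the basis,
\[
    e_i^*(e_j)=\delta_{ij},
\]
and extending by linearity: for $x=\sum_j x_j e_j$ we set $e_i^*(x)=x_i$. This is well defined because the coordinates $x_j$ of a vector in a basis are unique, and it is linear because taking coordinates is linear. We then show that $e_1^*,\ldots,e_n^*$ form a basis of $V^*$, which immediately gives $\dim V^*=n=\dim V$.

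\emph{Linear independence.} Suppose $\sum_i c_i e_i^*=0$ in $V^*$. Evaluating this zero functional on $e_j$ gives $\sum_i c_i\delta_{ij}=c_j=0$ for every $j$, so all coefficients vanish.

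\emph{Spanning.} Let $\varphi\in V^*$ be arbitrary and put $c_i:=\varphi(e_i)$. The functionals $\varphi$ and $\sum_i c_i e_i^*$ agree on every basis vector $e_j$. A linear map is determined by its values on a basis, since for $x=\sum_j x_je_j$ we have $\varphi(x)=\sum_j x_j\varphi(e_j)$. Hence the two functionals coincide, and $\varphi=\sum_i \varphi(e_i)\,e_i^*$.

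No step is a real obstacle. The only point needing care is that a linear map is determined by, and may be freely prescribed on, a basis; both facts reduce to the uniqueness and existence of coordinates. Equivalently, one could identify $V^*$ with the space of $1\times n$ row matrices via $\varphi\mapsto(\varphi(e_1),\ldots,\varphi(e_n))$, which is an isomorphism onto $\mathbb{R}^n$. I would also record the explicit dual basis constructed here, since it is exactly the object used later to build interpolating functions from boundary-condition functionals.
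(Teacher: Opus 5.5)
Your proof is correct and follows the paper's argument. Both fix a basis $e_1,\ldots,e_n$, define the dual functionals by $e_i^*(e_j)=\delta_{ij}$ (the paper's $\varepsilon^j$), prove linear independence by evaluating a vanishing combination on the basis vectors, and prove spanning via $\varphi=\sum_i\varphi(e_i)\,e_i^*$, with your explicit well-definedness check and direct derivation of $c_j=0$ (instead of the paper's smallest-index contradiction) being only cosmetic improvements.
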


The plan of the proof is as follows. We first define a family of linear functionals forming a so-called dual basis, and then verify that this family indeed satisfies the required properties.

Fix a basis in $V$,
\[
    V = \langle e_1, \ldots, e_n \rangle.
\]
Define linear functionals $\varepsilon^j$ on the basis vectors by
\begin{equation}\label{eq-biort}
    \varepsilon^j(e_i) =
    \begin{cases}
        1, & i = j,\\[2pt]
        0, & i \neq j.
    \end{cases}
\end{equation}

For an arbitrary vector $x = \sum_{i=1}^n e_i x_i$, using \eqref{eq-biort} and linearity, we obtain
\[
    \varepsilon^j(x)
    = \varepsilon^j\!\left(\sum_{i=1}^n e_i x_i\right)
    = x_j.
\]
Thus the functional $\varepsilon^j$ acts as the projection onto the $j$-th coordinate.

\begin{proof}
\begin{enumerate}

\item We show that the linear functionals $\varepsilon^j$ form a basis of $V^*$. For this we must verify linear independence and completeness—i.e., that every functional in~$V^*$ admits an expansion in terms of $\{\varepsilon^j\}$.

\begin{enumerate}
    \item \textbf{Linear independence.}  
    Consider the linear combination
    \[
        \hat{\varepsilon} := \alpha_1 \varepsilon^1 + \cdots + \alpha_n \varepsilon^n.
    \]
    Suppose $\hat{\varepsilon} = 0$, while not all coefficients $\alpha_i$ are zero. Let $k$ be the smallest index such that $\alpha_k \neq 0$. Then
    \[
        \hat{\varepsilon}(e_k)
        = \sum_{i=1}^n \alpha_i\, \varepsilon^i(e_k)
        = \alpha_k.
    \]
    But $\hat{\varepsilon} = 0$, so $\alpha_k = 0$, a contradiction. Hence all $\alpha_i$ must vanish, and therefore $\{\varepsilon^i\}$ is linearly independent.

    \item \textbf{Completeness.}  
    Any linear functional $\varphi \in V^*$ can be represented as a linear combination of the $\varepsilon^j$. Indeed, consider
    \begin{equation}\label{eq-coordstr}
        \varphi = \sum_{i=1}^n \varphi(e_i)\,\varepsilon^i.
    \end{equation}
    Let $\varphi'$ denote the functional on the right-hand side of \eqref{eq-coordstr}. For $x = \sum_{i=1}^n x_i e_i$ we have
    \[
        \varphi'(x)
        = \sum_{i=1}^n x_i\,\varphi(e_i)
        = \sum_{i=1}^n \varepsilon^i(x)\,\varphi(e_i),
    \]
    which is exactly $\varphi(x)$. Hence $\varphi = \varphi'$, completing the proof of completeness.
\end{enumerate}

\item Therefore, if $V = \langle e_1,\ldots, e_n \rangle$, then
\[
    V^* = \langle \varepsilon^1, \varepsilon^2, \ldots, \varepsilon^n \rangle,
\]
and consequently $\dim(V^*) = \dim(V)$, as claimed.
\end{enumerate}
\end{proof}

The lemma proved above makes the following definition meaningful.

\begin{definition}
The basis $\{\varepsilon^j\}$, $j = 1..n$, defined in $V^*$ by~\eqref{eq-biort} is called the \emph{dual basis} (also referred to as biorthogonal basis) with respect to the basis $\{e_i\}$ of the space $V$.
\end{definition}

Formula~\eqref{eq-coordstr} has an important consequence: in order to expand a functional in the dual basis, one does not need to know the basis explicitly.

\begin{corollary}
For any linear functional $\varphi \in V^*$ the following expansion holds:
\begin{equation}\label{eq-functionaldecomp}
    \varphi = \varphi(e_1)\,\varepsilon^1
            + \varphi(e_2)\,\varepsilon^2
            + \cdots
            + \varphi(e_n)\,\varepsilon^n.
\end{equation}
\end{corollary}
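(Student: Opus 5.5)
The corollary follows from the completeness step in the proof of Lemma~\ref{lemmaV=V*}. I will check directly that the two sides of \eqref{eq-functionaldecomp} agree as functionals on $V$. The plan is first to reduce to basis vectors, and then to use the biorthogonality relations \eqref{eq-biort}.

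Set $\psi := \sum_{i=1}^n \varphi(e_i)\,\varepsilon^i$. It is a linear combination of elements of $V^*$, so $\psi \in V^*$. Two linear functionals that agree on a basis of $V$ agree everywhere: for $x = \sum_j x_j e_j$, linearity gives $\varphi(x) = \sum_j x_j \varphi(e_j)$, and likewise for $\psi$. It therefore suffices to show $\psi(e_k) = \varphi(e_k)$ for each $k = 1,\ldots,n$.

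Evaluating $\psi$ at $e_k$ and using \eqref{eq-biort},
\[
    \psi(e_k) = \sum_{i=1}^n \varphi(e_i)\,\varepsilon^i(e_k) = \varphi(e_k),
\]
because only the term $i = k$ survives. Hence $\psi = \varphi$, which is \eqref{eq-functionaldecomp}.

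The argument has no real obstacle. The only point needing care is the reduction from arbitrary $x$ to basis vectors, which rests on linearity of both functionals. It is the same computation as $\varphi'(x) = \sum_i \varepsilon^i(x)\,\varphi(e_i)$ in the lemma's proof, rewritten in coordinate-free form. The coefficients in \eqref{eq-functionaldecomp} are also unique, since the $\varepsilon^i$ are linearly independent by part (a) of that proof.
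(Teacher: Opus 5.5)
Your proof is correct and is essentially the paper's own argument: the corollary is read off from the completeness step in the proof of Lemma~\ref{lemmaV=V*}, where $\varphi'=\sum_i \varphi(e_i)\,\varepsilon^i$ is compared with $\varphi$. The only difference is cosmetic. The paper evaluates both sides on an arbitrary $x=\sum_i x_i e_i$ using $\varepsilon^i(x)=x_i$, whereas you evaluate on the basis vectors $e_k$ and extend by linearity, which is the same computation.
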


In the sense of~\eqref{eq-functionaldecomp}, one says that a linear functional is represented by its \emph{coordinate row} (computed with respect to the dual basis):
\begin{equation}\label{eq-funct-string}
    \varphi = (\varphi(e_1),\, \varphi(e_2),\, \ldots,\, \varphi(e_n)).
\end{equation}

If, in addition, we identify vectors of $V$ with the coordinate columns of their expansions in the basis $\{e_i\}$, then the value of a linear functional on a given vector is the matrix product of its coordinate row with the corresponding column vector.

\bigskip

By Lemma~\ref{lemmaV=V*}, the spaces $V$ and $V^*$ are isomorphic, and therefore we obtain the following statement.

\begin{corollary}
The map $\varepsilon_V : V \to V^*$ defined on the basis by
\begin{equation}\label{eq-vareps}
    \varepsilon_V : e_i \mapsto \varepsilon^i,
\end{equation}
is an isomorphism.
\end{corollary}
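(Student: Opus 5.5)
The plan is to extend the assignment \eqref{eq-vareps} by linearity and then check that the resulting map is injective. Surjectivity will then follow from the dimension count in Lemma~\ref{lemmaV=V*}. First I define $\varepsilon_V$ on an arbitrary vector $x=\sum_{i=1}^n x_i e_i$ by
\[
    \varepsilon_V(x) := \sum_{i=1}^n x_i\,\varepsilon^i .
\]
Since the coordinates $x_i$ of $x$ in the basis $\{e_i\}$ are uniquely determined and depend linearly on $x$, this map is well defined and linear, and it agrees with \eqref{eq-vareps} on basis vectors.

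Next I show that the kernel is trivial. Suppose $\varepsilon_V(x)=0$, that is, $\sum_i x_i\varepsilon^i = 0$ as a functional. Evaluating this functional on $e_k$ and using \eqref{eq-biort} gives $x_k = 0$ for every $k$, so $x=0$. This is the same computation as the linear-independence step in the proof of Lemma~\ref{lemmaV=V*}, and I can simply cite that step: the $\varepsilon^i$ are linearly independent, so a vanishing combination of them has zero coefficients.

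Surjectivity can be obtained in either of two ways. One option is to note that $\dim V = \dim V^*$ by Lemma~\ref{lemmaV=V*}, so an injective linear map between these spaces is bijective by rank--nullity. The other option is direct. By the completeness part of that lemma, namely \eqref{eq-coordstr}, every $\varphi\in V^*$ satisfies
\[
    \varphi=\sum_i \varphi(e_i)\,\varepsilon^i = \varepsilon_V\Bigl(\sum_i \varphi(e_i)\,e_i\Bigr),
\]
so $\varphi$ lies in the image. I would use the direct argument, because it also gives the inverse explicitly: $\varphi \mapsto \sum_i \varphi(e_i)\,e_i$.

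There is no real obstacle here. The only point needing care is well-definedness, namely that extending a map defined on a basis linearly makes sense. This rests on the uniqueness of coordinates in a basis, which I would state explicitly. I would also remark that the isomorphism depends on the chosen basis $\{e_i\}$ and is not canonical.
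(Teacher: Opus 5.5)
Your proof is correct and follows the same route as the paper. The paper obtains the corollary directly from Lemma~\ref{lemmaV=V*}, whose proof shows that $\{\varepsilon^i\}$ is a basis of $V^*$, so the linear extension of $e_i\mapsto\varepsilon^i$ sends a basis to a basis. You make the two halves of that explicit: injectivity from linear independence, and surjectivity with the explicit inverse from \eqref{eq-coordstr}.
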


Under the identification of vectors in $V$ with column vectors and linear functionals in $V^*$ with row vectors, the map $\varepsilon_V$ corresponds to transposition.

It is important to note that the isomorphism $\varepsilon_V$ \textbf{depends essentially} on the choice of the basis $\langle e_i \rangle$. The transformation rule for the dual basis (and therefore the matrix of the isomorphism $\varepsilon_V$) can be obtained from standard considerations, but we will not need it later. We leave this formula to the reader.

\subsection{Extension of Linear Functionals}

Suppose the vector space $V$ is decomposed as a direct sum of nontrivial subspaces,
\[
    V = V_1 \oplus V_2.
\]

\begin{lemma}\label{lemma-V+V**}
If $V = V_1 \oplus V_2$, then the dual basis corresponding to the union of bases of $V_1$ and $V_2$ is the union of the dual bases of $V_1^*$ and $V_2^*$.
\end{lemma}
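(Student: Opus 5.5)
We fix bases $V_1 = \langle e_1,\ldots,e_m\rangle$ and $V_2 = \langle e_{m+1},\ldots,e_n\rangle$. Since $V = V_1\oplus V_2$, their union $\{e_1,\ldots,e_n\}$ is a basis of $V$, and by Lemma~\ref{lemmaV=V*} it has a dual basis $\{\varepsilon^1,\ldots,\varepsilon^n\}$ defined by~\eqref{eq-biort}. We also write $\{\eta^1,\ldots,\eta^m\}\subset V_1^*$ and $\{\eta^{m+1},\ldots,\eta^n\}\subset V_2^*$ for the dual bases of the two summands. The statement is meaningful only once $V_1^*$ and $V_2^*$ are regarded as subspaces of $V^*$. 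So the first step is to make this identification precise: a functional $\psi\in V_1^*$ is extended to $V$ by setting $\tilde\psi(x_1+x_2) := \psi(x_1)$ for $x_1\in V_1$, $x_2\in V_2$, and similarly for $V_2^*$. This extension is well defined and linear because the decomposition $x = x_1+x_2$ is unique. The extension map $V_1^*\to V^*$ is linear and injective, since $\tilde\psi|_{V_1}=\psi$.

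The main step is then to check that $\tilde\eta^j = \varepsilon^j$ for every $j$. A linear functional is determined by its values on a basis, so it suffices to evaluate both sides on each $e_i$. Take $j\le m$. If $i\le m$, then $\tilde\eta^j(e_i) = \eta^j(e_i) = \delta_{ij}$ by definition of the dual basis of $V_1$. If $i>m$, then $e_i\in V_2$, so its $V_1$-component is zero and $\tilde\eta^j(e_i)=0=\varepsilon^j(e_i)$. The case $j>m$ is symmetric. This gives the equality of the two families element by element. As a consequence we also obtain $V^* = \tilde V_1^*\oplus\tilde V_2^*$, since the union of the two images is a basis of $V^*$.

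The only genuine obstacle is the one addressed in the first step: the extension of functionals from $V_i$ to $V$ depends on the complementary subspace. Extending by zero on $V_2$ is exactly what makes the functionals match the $\varepsilon^j$, and with any other extension the claim fails. So the proof must state explicitly that the extension is zero on the other summand. Once that is fixed, the rest is the routine verification on basis vectors described above, using the expansion~\eqref{eq-functionaldecomp} if one wants to write each $\varepsilon^j$ in coordinates.
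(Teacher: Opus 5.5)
Your proof is correct and follows essentially the same route as the paper: extend each functional on $V_1$ (resp.\ $V_2$) by zero on the other summand, then check the biorthogonality relations on the union basis. The only differences are that you obtain $V^* = V_1^* \oplus V_2^*$ as a consequence rather than asserting it first, and that you spell out why the zero extension is well defined, which the paper leaves implicit.
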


\begin{proof}
We first show that
\[
    V^* = V_1^* \oplus V_2^*.
\]
To this end, extend any linear functional $\varphi \in V_1^*$ to a linear functional on $V$ in a trivial way: for every $v \in V_2$, set $\varphi(v) = 0$.

Now construct the dual basis for the decomposed spaces. Suppose we have bases
\[
    V_1 = \langle e^1_i \rangle,\quad i = 1..n_1,
    \qquad
    V_2 = \langle e^2_j \rangle,\quad j = 1..n_2,
\]
and the corresponding dual bases
\[
    V_1^* = \langle \varepsilon^1_i \rangle,\quad i = 1..n_1,
    \qquad
    V_2^* = \langle \varepsilon^2_j \rangle,\quad j = 1..n_2.
\]

Since $\varepsilon^1_i(v_2) = 0$ for every $v_2 \in V_2$, and $\varepsilon^2_j(v_1) = 0$ for every $v_1 \in V_1$, it follows that the set
\[
    V^* = \langle \varepsilon^1_i,\, \varepsilon^2_j \rangle,
    \qquad i = 1..n_1,\; j = 1..n_2,
\]
is the dual basis corresponding to the basis
\[
    V = \langle e^1_i,\, e^2_j \rangle,
    \qquad i = 1..n_1,\; j = 1..n_2.
\]
\end{proof}

From the lemma it follows that the isomorphism $\varepsilon_V$ defined by~\eqref{eq-vareps}, when restricted to $V_i$, $i=1,2$, coincides with~$\varepsilon_{V_i}$.

\subsection{Lagrange Interpolation Polynomial}

As a first example of using dual spaces, we demonstrate how they allow one to obtain the classical Lagrange interpolation polynomial as well and the Taylor formula.

\smallskip

We begin with the standard (monomial) basis in the polynomial space
\[
    \mathbb{R}_n[t] = \langle 1,\, t,\, \ldots,\, t^n \rangle.
\]

\begin{proposition}\label{prop-monombas}
The basis dual to the monomial basis is
\[
    \mathbb{R}_n^*[t] = \langle \delta^0_0,\; \delta^1_0/1!,\; \ldots,\; \delta_0^j/j!,\; \ldots,\; \delta_0^n/n! \rangle.
\]
\end{proposition}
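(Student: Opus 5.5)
The plan is to verify directly the defining relations~\eqref{eq-biort} for the pair of bases $e_i = t^i$, $i=0..n$, and $\varepsilon^j = \delta_0^j/j!$, $j=0..n$. By the construction in the proof of Lemma~\ref{lemmaV=V*}, the dual basis is uniquely determined by its values on the basis vectors. So it suffices to show
\[
    \frac{1}{j!}\,\delta_0^j\bigl(t^i\bigr) =
    \begin{cases}
        1, & i = j,\\[2pt]
        0, & i \neq j,
    \end{cases}
    \qquad i,j = 0..n.
\]
Once this holds, the functionals $\delta_0^j/j!$ coincide with the functionals $\varepsilon^j$ built in that proof. Hence they form a basis of $\mathbb{R}_n^*[t]$ dual to the monomial basis. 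Linearity of each $\delta_0^j$ is already recorded in Example~\ref{ex-delta}.

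The computation splits into three cases according to the relation between $i$ and $j$.
\begin{enumerate}
    \item If $j > i$, then the $j$-th derivative of $t^i$ is identically zero, so its value at $0$ vanishes.
    \item If $j < i$, then $(t^i)^{(j)} = \frac{i!}{(i-j)!}\,t^{i-j}$ with $i-j \ge 1$, which vanishes at $t=0$.
    \item If $j = i$, then $(t^i)^{(i)} = i!$, and dividing by $i!$ gives $1$.
\end{enumerate}
This establishes the biorthogonality relations.

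As a consistency check, I would also note the consequence via formula~\eqref{eq-coordstr} applied to vectors: every $p\in\mathbb{R}_n[t]$ has coordinates $\varepsilon^j(p) = p^{(j)}(0)/j!$, that is, $p(t)=\sum_j \frac{p^{(j)}(0)}{j!}t^j$. This is the Taylor formula the paper announces.

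There is no real obstacle here. The only point requiring care is the case $j<i$: one must make sure the power $t^{i-j}$ is strictly positive, so that evaluation at $0$ gives zero, and handle the convention $\delta_0^0 = $ evaluation at $0$ with $0!=1$ and $t^0 = 1$.
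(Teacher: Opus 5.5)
Your proof is correct and follows the paper's argument: both verify the relations \eqref{eq-biort} for $\delta_0^j/j!$ against $t^i$ using the same three cases ($j<i$, $j>i$, $j=i$). You also note that a linear functional is determined by its values on a basis, so these functionals coincide with the $\varepsilon^j$ of Lemma~\ref{lemmaV=V*}; this makes explicit why they form a basis, a point the paper leaves implicit.
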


\begin{proof}
To verify condition~\eqref{eq-biort}, evaluate the functional $\delta_0^j/j!$ on the monomial $t^k$.

\begin{itemize}
    \item If $j < k$, then after $j$ differentiations we obtain a monomial of the form $C t^{k-j}$, whose value at zero is zero.
    \item If $j > k$, then after $j$ differentiations the result is the identically zero polynomial.
    \item In the single nontrivial case $j = k$,
    \[
        \frac{\delta_0^j(t^j)}{j!}
        = \frac{j! \, t^0}{j!}
        = 1.
    \]
\end{itemize}

Thus condition~\eqref{eq-biort} holds.
\end{proof}

It is straightforward to see what happens if we consider the functionals $\delta^j_{t_0}$.

\begin{proposition}
In $\mathbb{R}_n^*[t]$, the set
\[
    \mathbb{R}_n^*[t] = \langle \delta^0_{t_0},\; \delta^1_{t_0}/1!,\; \ldots,\; \delta^j_{t_0}/j!,\; \ldots,\; \delta^n_{t_0}/n! \rangle
\]
is the dual basis to the shifted monomials $(t - t_0)^j$:
\[
    \mathbb{R}_n[t] = \langle 1,\; (t - t_0),\; \ldots,\; (t - t_0)^n \rangle.
\]
\end{proposition}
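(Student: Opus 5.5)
The plan is to check the biorthogonality condition~\eqref{eq-biort} directly, as in the proof of Proposition~\ref{prop-monombas}, but with the origin moved to $t_0$. First observe that the shifted monomials $1,\,(t-t_0),\,\ldots,\,(t-t_0)^n$ do form a basis of $\mathbb{R}_n[t]$. There are $n+1$ of them, which equals $\dim \mathbb{R}_n[t]$. Expanding $(t-t_0)^k$ by the binomial formula gives $t^k$ plus lower-degree terms, so the transition matrix from the monomial basis is unitriangular and hence invertible. With the basis established, Lemma~\ref{lemmaV=V*} and the definition of the dual basis reduce the claim to showing
\[
    \frac{\delta^j_{t_0}\bigl((t-t_0)^k\bigr)}{j!} =
    \begin{cases} 1, & j=k,\\ 0, & j\neq k,\end{cases}
    \qquad 0\le j,k\le n.
\]
Once this holds, the listed functionals are exactly the dual basis, because the dual basis is uniquely determined by its values on a basis.

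For the verification I would differentiate $(t-t_0)^k$ exactly $j$ times and then evaluate at $t=t_0$. There are three cases.
\begin{itemize}
    \item If $j<k$, the result is $\frac{k!}{(k-j)!}(t-t_0)^{k-j}$, which vanishes at $t_0$ because $k-j\ge 1$.
    \item If $j>k$, the $j$-th derivative is identically zero.
    \item If $j=k$, the derivative is the constant $k!$, and dividing by $j!$ gives $1$.
\end{itemize}
This is the same case analysis as in Proposition~\ref{prop-monombas}.

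An alternative is to reduce to Proposition~\ref{prop-monombas} via the translation $T:p(t)\mapsto p(t+t_0)$. This is an automorphism of $\mathbb{R}_n[t]$ that sends $(t-t_0)^k$ to $t^k$ and satisfies $\delta^j_{t_0}=\delta^j_0\circ T$, and duality is preserved under an automorphism composed this way. The direct computation is shorter, though, so I would present that one.

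No step is a genuine obstacle. The only point needing care is the justification that the shifted monomials form a basis, and the triangularity argument settles it. Alternatively, biorthogonality itself implies linear independence of the shifted monomials, by the same argument as the linear independence part of the proof of Lemma~\ref{lemmaV=V*} with the roles of vectors and functionals swapped. A final remark worth adding is that expanding $p$ in this dual pair gives $p(t)=\sum_{j} \frac{p^{(j)}(t_0)}{j!}(t-t_0)^j$, which is the Taylor formula mentioned in the text.
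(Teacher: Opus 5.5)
Your proof is correct and takes essentially the same approach as the paper: the paper also notes that the shifted monomials form a basis and checks biorthogonality with the same three-case differentiation argument as in Proposition~\ref{prop-monombas}. Your unitriangular transition-matrix argument fills in the basis step that the paper simply calls easy to check.
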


\begin{proof}
It is easy to check that the shifted monomials form a basis. Verification of the duality condition is identical to the proof of Proposition~\ref{prop-monombas}.
\end{proof}

From this we obtain the polynomial version of the Taylor formula.

\begin{example}\label{prop-polnR}
For any polynomial \(p(t) \in \mathbb{R}_n[t]\),
\[
    p(t) = \sum_{j=0}^{n} \frac{\delta^j_{t_0}(p)}{j!}\, (t - t_0)^j.
\]
\end{example}

\begin{proof}
Every polynomial can be expanded in the shifted monomial basis:
\[
    p(t) = \sum_{j=0}^{n} c_j (t - t_0)^j.
\]

Applying $\delta_{t_0}^k$ to both sides,
\[
    \delta_{t_0}^j(p)
    = \sum_{k=0}^n c_k\, \delta_{t_0}^j\bigl((t - t_0)^k\bigr).
\]

All terms vanish except the one with $k = j$, which gives
\[
    c_j = \frac{\delta_{t_0}^j(p)}{j!}.
\]
\end{proof}

Now choose in $\mathbb{R}_n^*[t]$ the functionals \(\delta^0_{t_0}, \delta^0_{t_1}, \ldots, \delta^0_{t_n}\).

\begin{proposition}\label{prop-lagrdualbasis}
If $t_0, \ldots, t_n$ are pairwise distinct, then the functionals $\delta^0_{t_i}$, $i = 0..n$, form a basis:
\[
    \mathbb{R}_n^*[t] = \langle
        \delta^0_{t_0},\, \delta^0_{t_1},\, \ldots,\, \delta^0_{t_n}
    \rangle.
\]
\end{proposition}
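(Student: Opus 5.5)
Since $\dim \mathbb{R}_n^*[t] = \dim \mathbb{R}_n[t] = n+1$ by Lemma~\ref{lemmaV=V*}, and we are given exactly $n+1$ functionals $\delta^0_{t_0},\ldots,\delta^0_{t_n}$, it suffices to prove that they are linearly independent. The cleanest route is to exhibit a basis of $\mathbb{R}_n[t]$ to which this family is dual in the sense of~\eqref{eq-biort}. Biorthogonality forces linear independence, exactly as in the linear-independence step of the proof of Lemma~\ref{lemmaV=V*}.

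The basis I would use is the family of Lagrange basis polynomials
\[
    \ell_i(t) = \prod_{k \neq i} \frac{t - t_k}{t_i - t_k}, \qquad i = 0..n.
\]
Each $\ell_i$ is well defined because the $t_k$ are pairwise distinct, so no denominator vanishes. Each $\ell_i$ is a product of $n$ linear factors and therefore lies in $\mathbb{R}_n[t]$. Evaluating gives $\delta^0_{t_j}(\ell_i) = \ell_i(t_j)$. For $j \neq i$ the factor with $k=j$ is zero, so this value is $0$. For $j=i$ every factor equals $1$, so the value is $1$. Thus condition~\eqref{eq-biort} holds for the pair of families $\{\ell_i\}$ and $\{\delta^0_{t_j}\}$.

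It remains to deduce independence. Suppose $\sum_j \alpha_j \delta^0_{t_j} = 0$. Applying this functional to $\ell_i$ gives $\alpha_i = 0$ for every $i$. Hence the $n+1$ functionals are linearly independent in the $(n+1)$-dimensional space $\mathbb{R}_n^*[t]$, and so they form a basis.

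As a by-product, the $\ell_i$ are themselves linearly independent: applying $\delta^0_{t_j}$ to a relation $\sum_i c_i \ell_i = 0$ gives $c_j = 0$. Since there are $n+1$ of them, they form a basis of $\mathbb{R}_n[t]$. Consequently $\{\delta^0_{t_i}\}$ is precisely the dual basis to $\{\ell_i\}$, which recovers the Lagrange formula $p = \sum_i p(t_i)\,\ell_i$.

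There is no real obstacle here. The only point needing care is that distinctness of the nodes is used exactly once, to make the denominators $t_i - t_k$ nonzero. An alternative route would be to show that the Vandermonde determinant is nonzero, or to argue that a nonzero polynomial of degree at most $n$ cannot have $n+1$ roots. The explicit construction above is preferable because it also produces the dual basis.
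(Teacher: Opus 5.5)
Your proof is correct, but it takes a different route from the paper.

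The paper writes each $\delta^0_{t_i}$ in coordinates with respect to the dual monomial basis $\delta^j_0/j!$ of Proposition~\ref{prop-monombas}, which gives the row $(1, t_i, \ldots, t_i^n)$. The transition matrix is therefore the Vandermonde matrix $W^T$, and the paper then cites, without proof, the classical fact that this matrix is nonsingular for pairwise distinct nodes.

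You instead construct the biorthogonal family $\ell_i$ explicitly. Linear independence follows directly from $\ell_i(t_j)=0$ for $j\neq i$ and $\ell_i(t_i)=1$, and the dimension count completes the argument.

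Your argument is self-contained. It never needs the Vandermonde determinant formula, and in fact it shows that $W^T$ is invertible as a by-product. It also produces at once the dual basis that the paper only writes down after the proposition.

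The paper's route fits its general framework better. It is the special case of the criterion behind Theorem~\ref{th-basiscalc}: invertibility of the matrix $\mathbf{A}_{\varepsilon,\mathbf{v}}$ of functional values on a basis. That criterion still works for schemes such as $\{t_1,t_2\mid t_3\mid t_4\mid \mathbb{R}_3[t]\}$, where no product formula for the dual basis is available. In such cases one must check a determinant like~\eqref{eq-detAp} rather than guess biorthogonal functions.
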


\begin{proof}
The coordinate row of $\delta^0_{t_i}$ in the monomial basis (see~\eqref{eq-funct-string}) is
\[
    \delta^0_{t_i} = (1,\; t_i,\; t_i^2,\; \ldots,\; t_i^n).
\]

Thus the transition from the dual monomial basis of Proposition~\ref{prop-monombas} to $\{\delta^0_{t_i}\}$ is given by the Vandermonde matrix
\[
    W^T
    := \begin{pmatrix}
        1 & t_0 & \ldots & t_0^n \\
        1 & t_1 & \ldots & t_1^n \\
           &    & \ddots &        \\
        1 & t_n & \ldots & t_n^n
    \end{pmatrix}.
\]
By assumption this matrix is nonsingular; therefore the functionals form a basis.
\end{proof}

We now determine the basis of polynomials dual to the basis of Proposition~\ref{prop-lagrdualbasis}. For the dual basis polynomial $L_i$, the biorthogonality condition is
\[
    L_i(t_j) =
    \begin{cases}
        1, & i = j,\\
        0, & i \neq j.
    \end{cases}
\]

Hence the (degree $\le n$) dual basis polynomial is
\begin{equation}
    L_i(t)
    = \frac{
        (t - t_0)(t - t_1)\cdots \widehat{(t - t_i)} \cdots (t - t_n)
    }{
        (t_i - t_0)(t_i - t_1)\cdots \widehat{(t_i - t_i)} \cdots (t_i - t_n)
    }.
\end{equation}
Here $\widehat{(\cdot)}$ denotes an omitted factor. Applying the isomorphism $\varepsilon_{\mathbb{R}_n}$ yields the classical Lagrange interpolation formula (see Example~\ref{ex-Lagr}).

\begin{proposition}\label{prop-Lagrpoly}
Let $L(t)$ be a polynomial of degree at most $n$ such that
\[
    L(t_0) = \alpha_0,\quad
    L(t_1) = \alpha_1,\quad
    \ldots,\quad
    L(t_n) = \alpha_n.
\]
Then $L(t)$ is unique and has the form
\[
    L(t)
    = \sum_{i=0}^n \alpha_i\, L_i(t)
    = \sum_{i=0}^n
        \alpha_i\,
        \frac{
            (t - t_0)(t - t_1)\cdots \widehat{(t - t_i)} \cdots (t - t_n)
        }{
            (t_i - t_0)(t_i - t_1)\cdots \widehat{(t_i - t_i)} \cdots (t_i - t_n)
        }.
\]
\end{proposition}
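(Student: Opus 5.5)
The plan is to combine Proposition~\ref{prop-lagrdualbasis} with the expansion of a vector in a basis. The key fact is that the coordinates of a vector in a basis $\{e_i\}$ are obtained by applying the dual functionals $\varepsilon^j$. We therefore take the functionals $\delta^0_{t_0},\ldots,\delta^0_{t_n}$ as the dual basis and the polynomials $L_0,\ldots,L_n$ as the primal basis. The argument then has three steps: check that the $L_i$ form the basis biorthogonal to $\{\delta^0_{t_j}\}$; read off the coordinates of any $L$ satisfying the conditions; and conclude both existence and uniqueness.

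\textbf{Step 1.} First I will verify directly that $L_i \in \mathbb{R}_n[t]$ and that $\delta^0_{t_j}(L_i)$ equals $1$ if $i=j$ and $0$ otherwise.
\begin{itemize}
\item Each $L_i$ is a product of $n$ linear factors, so it lies in $\mathbb{R}_n[t]$.
\item Its denominator is nonzero because the $t_k$ are pairwise distinct.
\item For $j\neq i$ the numerator contains the factor $(t-t_j)$, so it vanishes at $t_j$.
\item At $t_i$ the numerator equals the denominator, giving the value $1$.
\end{itemize}

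\textbf{Step 2.} Next I will show that $L_0,\ldots,L_n$ form a basis of $\mathbb{R}_n[t]$. Linear independence follows exactly as in the proof of Lemma~\ref{lemmaV=V*}: apply $\delta^0_{t_j}$ to a vanishing combination $\sum c_i L_i=0$ to get $c_j=0$. Since there are $n+1=\dim\mathbb{R}_n[t]$ such polynomials, they form a basis. By Step~1 and Proposition~\ref{prop-lagrdualbasis}, $\{\delta^0_{t_j}\}$ is then precisely the dual basis to $\{L_i\}$ in the sense of~\eqref{eq-biort}.

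\textbf{Step 3.} Now I will handle existence and uniqueness.
\begin{itemize}
\item \emph{Existence.} Set $L=\sum_i \alpha_i L_i$. By linearity and Step~1, $L(t_j)=\sum_i\alpha_i\delta^0_{t_j}(L_i)=\alpha_j$.
\item \emph{Uniqueness.} Let $\tilde L\in\mathbb{R}_n[t]$ be any polynomial with $\tilde L(t_j)=\alpha_j$. Expand it in the basis, $\tilde L=\sum_i c_i L_i$. Applying $\delta^0_{t_j}$ gives $c_j=\delta^0_{t_j}(\tilde L)=\alpha_j$. This is the coordinate-projection property $\varepsilon^j(x)=x_j$ noted before Lemma~\ref{lemmaV=V*}. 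Hence $\tilde L=L$.
\end{itemize}

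As a cross-check on uniqueness, the difference of two interpolants is a polynomial of degree at most $n$ with $n+1$ roots, so it must vanish. This is consistent with the nonsingularity of the Vandermonde matrix used in Proposition~\ref{prop-lagrdualbasis}.

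I do not expect a genuine obstacle. The only subtle point is Step~2: we must make sure that biorthogonality together with the dimension count actually identifies $\{\delta^0_{t_j}\}$ as the dual basis of $\{L_i\}$, rather than assuming this in advance.
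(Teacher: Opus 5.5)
Your proof is correct and follows the paper's dual-basis route: the paper identifies the $L_i$ as the polynomials biorthogonal to the functionals $\delta^0_{t_j}$ and obtains the interpolant $\sum_i \alpha_i L_i$, together with its uniqueness, via the isomorphism $\varepsilon_{\mathbb{R}_n}$, which is what your Steps 1 and 3 do. The only difference is that the paper gets the basis property from the nonsingular Vandermonde matrix in Proposition~\ref{prop-lagrdualbasis}, whereas your Step 2 derives it directly from biorthogonality plus a dimension count, so your appeal to that proposition is not actually needed.
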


Thus, the Lagrange interpolation polynomial obtained in this way is the solution to the pointwise interpolation scheme of order zero (see Definition~\ref{def-interpscheme}) of type
\[
    \{t_0, \ldots, t_n \mid \mathbb{R}_n[t]\}.
\]

\subsection{General structure of pointwise interpolation schemes}\label{sec-generalschemeproperties}

Consider a general pointwise interpolation scheme (see Definition~\ref{def-interpscheme})
\begin{equation}\label{eq=schemegener}
     \{t_1,\ldots, t_{k_1}\mid t_{k_1+1},\ldots ,t_{k_2}\mid \ldots \mid t_{k_s+1},\ldots, t_{N} \mid F\}.
\end{equation}

Introduce the notation
\[
    [i] := \max_{j=1..s+1}\{\,k_j \mid k_j < i\,\}.
\]

Define the corresponding family of linear functionals in $F^*$:
\begin{equation}
    \delta^{[i]}_{t_i}\colon f \mapsto f^{([i])}(t_i).
\end{equation}

We now formulate the general statement.

\begin{theorem}\label{th-schemegeneral}
    If the linear functionals $\delta^{[i]}_{t_i}$ form a basis of the dual space $F^*$ and  $\dim(F)=N$ then the interpolation problem determined by the scheme~\eqref{eq=schemegener} has a unique solution in $F$, .
\end{theorem}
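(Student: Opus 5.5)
The plan is to reduce the interpolation problem to the existence of a dual basis in $F$ itself. Write $\varphi_i := \delta^{[i]}_{t_i}$, $i = 1..N$. By hypothesis $\{\varphi_i\}$ is a basis of $F^*$, and $\dim(F)=N$, so Lemma~\ref{lemmaV=V*} gives $\dim(F^*) = N$. The scheme~\eqref{eq=schemegener} asks for $p_* \in F$ with $\varphi_i(p_*) = \alpha_i$ for all $i$. So the statement amounts to bijectivity of the evaluation map
\[
    \Phi\colon F \to \mathbb{R}^N,\qquad \Phi(f) = \bigl(\varphi_1(f),\ldots,\varphi_N(f)\bigr).
\]

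First I will prove that $\Phi$ is injective. Suppose $\Phi(f) = 0$. Every $\psi \in F^*$ is a linear combination of the $\varphi_i$, so $\psi(f) = 0$ for all $\psi \in F^*$. Now fix any basis $\{e_j\}$ of $F$ with dual basis $\{\varepsilon^j\}$. Taking $\psi = \varepsilon^j$ shows that every coordinate of $f$ vanishes, hence $f = 0$. Since $\Phi$ is a linear map between spaces of the same dimension $N$, injectivity implies surjectivity. This gives existence and uniqueness at once.

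To match the paper's constructive style, I will also exhibit the solution explicitly. I will produce elements $e_1,\ldots,e_N \in F$ with $\varphi_i(e_j) = \delta_{ij}$, that is, the basis of $F$ whose dual basis is $\{\varphi_i\}$. Expand each $\varphi_i$ in the dual basis of $\{e_j\}$ via~\eqref{eq-functionaldecomp}. This gives an $N\times N$ matrix $A$ with entries $\varphi_i(e_j)$, which is nonsingular because the $\varphi_i$ form a basis, exactly as in the Vandermonde argument of Proposition~\ref{prop-lagrdualbasis}. The columns of $A^{-1}$ then give the coordinates of the required biorthogonal family. The solution is
\[
    p_* = \sum_{i=1}^N \alpha_i e_i .
\]
Uniqueness also follows from this construction: the difference of two solutions is annihilated by every $\varphi_i$, hence by all of $F^*$, hence it is zero.

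The only step needing care is the passage from ``the $\varphi_i$ span $F^*$'' to ``$\Phi$ is injective'', that is, the fact that a vector annihilated by all of $F^*$ is zero. I handle this with the coordinate functionals of Lemma~\ref{lemmaV=V*}. I also need $\dim F = N$ to be finite so that rank--nullity applies; that is exactly why it is included as a hypothesis.
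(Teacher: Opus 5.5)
Your proof is correct, but your main argument takes a different route from the paper's. The paper sets $p_* = \varepsilon\bigl(\sum_i \alpha_i \delta^{[i]}_{t_i}\bigr)$. That is, it carries the functional $\sum_i\alpha_i\delta^{[i]}_{t_i}$ back to $F$ through the basis-dependent isomorphism~\eqref{eq-vareps} (really its inverse), which amounts to $p_*=\sum_i \alpha_i p_i$ with $\{p_i\}$ the basis of $F$ biorthogonal to the $\delta^{[i]}_{t_i}$. It then simply attributes uniqueness to Lemma~\ref{lemmaV=V*}. This tacitly assumes that the given basis of $F^*$ is the dual basis of some basis of $F$; that fact is only justified later, through the invertibility of $\mathbf{A}_{\varepsilon,\mathbf{v}}$ in Theorem~\ref{th-basiscalc}. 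Your evaluation map $\Phi$ avoids the issue entirely: the coordinate functionals $\varepsilon^j$ separate points, which gives injectivity, and rank--nullity then yields existence and uniqueness together, with uniqueness actually proved rather than cited. The price is that this argument is non-constructive. Your second paragraph recovers the paper's constructive route and its explicit formula $p_*=\sum\alpha_i p_i$, and it supplies the step the paper leaves implicit by showing that $A$ is nonsingular, anticipating Theorem~\ref{th-basiscalc}. There is one notational slip in that paragraph. You use $e_j$ both for the fixed reference basis defining $A_{ij}=\varphi_i(e_j)$ and for the biorthogonal family you are constructing; if the $e_j$ were already biorthogonal, $A$ would be the identity. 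Give the target family its own name, e.g.\ $p_i$.
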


In other words, adding new data to the interpolation scheme requires enlarging the family of linear functionals, which, in turn, necessitates enlarging the function space $F$ by one new linearly independent direction.

\begin{proof}
    From the duality condition~\eqref{eq-biort} we obtain
    \[
        \delta^{[i]}_{t_i}(f) = f^{([i])}(t_i).
    \]
    By Lemma~\ref{lemmaV=V*}, for the given set of prescribed values $\{\alpha_i\}_{i=1}^N$, the map $\varepsilon$ from~\eqref{eq-vareps} is an isomorphism. Therefore the function
    \[
        p_* := \varepsilon\!\left(\alpha_1 \delta^{[1]}_{t_1}
        + \alpha_2 \delta^{[2]}_{t_2}
        + \dots
        + \alpha_N \delta^{[N]}_{t_N}
        \right)
    \]
    solves the interpolation problem associated with the scheme~\eqref{eq=schemegener}.  
    Uniqueness follows from Lemma~\ref{lemmaV=V*}.
\end{proof}

To construct the interpolating function explicitly, choose a basis $\{p_i\}$ of $F$ dual to $\{\delta^{[i]}_{t_i}\}$, and let $\alpha_i$ be the prescribed data.

\begin{corollary}
    The interpolating function has the form
    \[
        p_*(t) = \sum_{i=1}^N \alpha_i\, p_i(t).
    \]
\end{corollary}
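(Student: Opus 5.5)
The statement just above is the Corollary to Theorem~\ref{th-schemegeneral}: the interpolant is $p_*=\sum_{i=1}^N\alpha_i p_i$. The plan is to verify directly that this function satisfies all $N$ interpolation conditions, and then to invoke the uniqueness part of Theorem~\ref{th-schemegeneral}. I will not trace the construction through the isomorphism $\varepsilon$.

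First, existence of the basis $\{p_i\}$. By hypothesis the functionals $\delta^{[i]}_{t_i}$, $i=1..N$, form a basis of $F^*$, and $\dim F = N$. Fix any basis $\{e_k\}$ of $F$ and form the $N\times N$ matrix $M$ with entries $M_{ik}=\delta^{[i]}_{t_i}(e_k)$. The rows of $M$ are the coordinate rows~\eqref{eq-funct-string} of the functionals $\delta^{[i]}_{t_i}$. These functionals are linearly independent, so $M$ is nonsingular. Writing $p_j=\sum_k c_{kj}e_k$, the biorthogonality conditions
\[
    \delta^{[i]}_{t_i}(p_j)=\begin{cases}1,& i=j,\\ 0,& i\neq j\end{cases}
\]
become $MC=I$. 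This system has the unique solution $C=M^{-1}$, so the dual basis $\{p_i\}$ of $F$ exists and is unique. Its elements form a basis because $C$ is invertible.

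Second, the verification. By linearity,
\[
    \delta^{[j]}_{t_j}\Bigl(\sum_i \alpha_i p_i\Bigr)=\sum_i\alpha_i\,\delta^{[j]}_{t_j}(p_i)=\alpha_j .
\]
Since $\delta^{[j]}_{t_j}(f)=f^{([j])}(t_j)$, this says exactly that $p_*$ meets the $j$-th condition of Definition~\ref{def-interpscheme}. Hence $p_*$ solves the scheme~\eqref{eq=schemegener}.

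Third, uniqueness. Any other solution $q$ would satisfy $\delta^{[j]}_{t_j}(p_*-q)=0$ for all $j$. A vector killed by a basis of $F^*$ is zero, which can be seen by expanding an arbitrary functional as in~\eqref{eq-functionaldecomp} and noting that every coordinate functional then vanishes on $p_*-q$. Therefore $q=p_*$. This also agrees with the solution produced in the proof of Theorem~\ref{th-schemegeneral}.

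The only step that needs care is the first one: the basis $\{p_i\}$ is defined as dual to a given basis of $F^*$, not the reverse direction used in Lemma~\ref{lemmaV=V*}. The invertibility of $M$ settles this. Everything else is routine linearity.
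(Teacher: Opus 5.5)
Your proof is correct, and its core is the same as the paper's, but you unwind a step the paper leaves implicit. The paper gets this corollary for free from the proof of Theorem~\ref{th-schemegeneral}. There $p_*$ is defined as $\varepsilon\bigl(\sum_i \alpha_i \delta^{[i]}_{t_i}\bigr)$, and the identification $\varepsilon$ is linear and sends each $\delta^{[i]}_{t_i}$ to its dual element $p_i$, so $p_*=\sum_i\alpha_i p_i$ follows at once.

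You verify directly that $\delta^{[j]}_{t_j}\bigl(\sum_i\alpha_i p_i\bigr)=\alpha_j$. This is the same biorthogonality-plus-linearity fact that makes the paper's $\varepsilon$-construction work. You also establish two things the paper takes for granted at this point:
\begin{itemize}
\item You prove that a basis $\{p_i\}$ of $F$ dual to the given basis of $F^*$ exists, via invertibility of $M$. This is the reverse of the direction covered by Lemma~\ref{lemmaV=V*}, and it is exactly the argument of Theorem~\ref{th-basiscalc}, which the paper proves only later.
\item You re-derive uniqueness directly rather than citing the theorem.
\end{itemize}

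The paper's route is shorter once the isomorphism machinery is accepted. Yours is self-contained, does not depend on the rather terse proof of Theorem~\ref{th-schemegeneral}, and gives an explicit recipe ($C=M^{-1}$) for computing the $p_i$.
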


We also state the following evident consequence, which may be regarded as an optimality principle.

\begin{corollary}
    If $F$ is the space of ordinary or trigonometric polynomials of degree at most $n$, then $p_*(t)$ has minimal degree among all possible solutions to the given interpolation scheme.
\end{corollary}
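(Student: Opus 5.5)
The statement immediately above is the last corollary: if $F$ is the space of ordinary (or trigonometric) polynomials of degree at most $n$, then $p_*$ has minimal degree among all solutions of the scheme. I would read this as saying that no polynomial of degree smaller than $p_*$ can satisfy the $N$ interpolation conditions for every choice of data $\{\alpha_i\}$, and I would prove it by a dimension count based on Theorem~\ref{th-schemegeneral} and Lemma~\ref{lemmaV=V*}. The essential point is that the hypothesis of the theorem, that the $N$ functionals $\delta^{[i]}_{t_i}$ form a basis of $F^*$, forces $\dim F = N$. For ordinary polynomials this gives $N = n+1$, and for trigonometric polynomials $N = 2n+1$.

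\textbf{Step 1: the evaluation map and the smaller space.} Consider the linear map
\[
\Phi\colon F \to \mathbb{R}^N, \qquad \Phi(f) = \bigl(\delta^{[1]}_{t_1}(f),\ldots,\delta^{[N]}_{t_N}(f)\bigr).
\]
By Theorem~\ref{th-schemegeneral}, $\Phi$ is a bijection. Now let $G \subset F$ be the subspace of polynomials of degree at most $m < n$. Then $\dim G < \dim F = N$, since $\dim G = m+1$ in the ordinary case and $2m+1$ in the trigonometric case.

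\textbf{Step 2: the smaller space cannot fit all data.} The restriction $\Phi|_G$ is a linear map from a space of dimension less than $N$ into $\mathbb{R}^N$. By rank--nullity its image is a proper subspace of $\mathbb{R}^N$. Hence there is data $(\alpha_i)$ for which the scheme has no solution in $G$. So no space of smaller degree solves the scheme in general, and degree $n$ is the minimal uniform choice.

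\textbf{Step 3: minimality for each fixed data set.} If instead the statement is meant for fixed data, the argument is uniqueness. Any solution $q$ of degree at most $n$ lies in $F$, so injectivity of $\Phi$ gives $q = p_*$. Therefore every solution of degree at most $n$ equals $p_*$, and in particular none has strictly smaller degree than $p_*$. Solutions outside $F$ have degree greater than $n \ge \deg p_*$, so they are not smaller either.

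\textbf{Main obstacle.} The hardest part is interpretive rather than computational: deciding which of the two readings, uniform over all data or for fixed data, the corollary asserts. I would prove both as above. In the trigonometric case I also need that the trigonometric polynomials of degree at most $m$ form a subspace of dimension $2m+1$, with basis $1, \cos kt, \sin kt$. I would take this as the standard basis and assume it without further proof.
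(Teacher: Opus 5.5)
Your Step 3 is the paper's argument: the interpolant is unique in $F$, and every polynomial outside $F$ has degree exceeding $n$, so $p_*$ has minimal degree among all solutions. Your Step 2 dimension count for the ``uniform over all data'' reading is a correct addition that the paper does not spell out; it only hints at it in the remark following Theorem~\ref{th-schemegeneral}.
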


This statement holds because the space of all polynomials is partially ordered by degree. Since the interpolant in $F$ is unique, and all polynomials not belonging to $F$ have strictly higher degree, the interpolant constructed in $F$ has the minimal possible degree.

Later we shall show that all possible solutions within a given polynomial interpolation class form an affine space (see Proposition~\ref{prop-aff}). The same reasoning applies, of course, to other analogous function spaces.

\section{Hermite splines}\label{sec-splines}

The interpolation problem of constructing a Hermite spline on a given grid $\{t_i\}$ consists in constructing a piecewise polynomial function satisfying the prescribed conditions at each point $t_i$. More precisely, on every interval $[t_i, t_{i+1}]$ the desired function $s(t)$ must coincide with a polynomial of minimal degree such that, for the prescribed parameters $\alpha_i^j$, $j = 0..n$, the following boundary conditions hold:
\[
    s(t_i) = \alpha^0_i,\quad 
    s'(t_i) = \alpha^1_i,\quad 
    \dots,\quad 
    s^{(n)}(t_i) = \alpha^n_i.
\]

We shall solve this problem on a single fixed interval $[t_1, t_2]$. In the terminology introduced earlier (see Definition~\ref{def-interpscheme}), this corresponds to a pointwise interpolation scheme of order $n$ in the class of polynomials, of the following type:
\[
    \{\underbrace{t_1, t_2 \mid t_1, t_2 \mid \ldots \mid t_1, t_2}_{n\ \text{times}} 
    \mid \mathbb{R}_{2n+1}[t]\}.
\]

\subsection{Existence of the interpolation polynomial on an interval}

The main result used in the construction of Hermite splines is the following.

\begin{theorem}\label{th-Rnt1t2}
    Let $t_1 \neq t_2 \in \mathbb{R}$. For a fixed $n$ and for an arbitrary collection of real numbers
    \[
        \{\alpha_{1,0}, \alpha_{1,1}, \dots, \alpha_{1,n};\ \alpha_{2,0}, \alpha_{2,1}, \dots, \alpha_{2,n}\},
    \]
    there exists a unique polynomial $p_*(\cdot) \in \mathbb{R}_{2n+1}[t]$ such that
    \begin{align*}
        p_*(t_1) &= \alpha_{1,0},\qquad p_*(t_2) = \alpha_{2,0},\\
        p_*'(t_1) &= \alpha_{1,1},\qquad p_*'(t_2) = \alpha_{2,1},\\
        &\ \ \vdots \\
        p_*^{(n)}(t_1) &= \alpha_{1,n},\qquad p_*^{(n)}(t_2) = \alpha_{2,n}.
    \end{align*}
\end{theorem}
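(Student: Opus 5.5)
We will derive the statement from Theorem~\ref{th-schemegeneral} applied to the scheme
\[
    \{t_1, t_2 \mid t_1, t_2 \mid \ldots \mid t_1, t_2 \mid \mathbb{R}_{2n+1}[t]\},
\]
whose family of functionals is $\delta^k_{t_1}, \delta^k_{t_2}$ for $k = 0..n$. This family has $N = 2n+2$ elements. The space has $\dim \mathbb{R}_{2n+1}[t] = 2n+2$, so the dimension hypothesis holds automatically. By Lemma~\ref{lemmaV=V*}, it therefore suffices to show that these $2n+2$ functionals are linearly independent in $\mathbb{R}_{2n+1}^*[t]$; they will then form a basis of the dual space.

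For linear independence, we pass to the equivalent statement about the evaluation map
\[
    \Phi\colon \mathbb{R}_{2n+1}[t] \to \mathbb{R}^{2n+2},\qquad
    p \mapsto \bigl(p^{(k)}(t_1),\, p^{(k)}(t_2)\bigr)_{k=0..n}.
\]
The coordinate rows of the functionals (in the sense of~\eqref{eq-funct-string}) are exactly the rows of the matrix of $\Phi$. Hence the functionals are independent if and only if this square matrix is nonsingular, which holds if and only if $\ker \Phi = 0$.

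So let $p \in \mathbb{R}_{2n+1}[t]$ satisfy $p^{(k)}(t_1) = p^{(k)}(t_2) = 0$ for all $k = 0..n$. Expanding $p$ by the Taylor formula of Example~\ref{prop-polnR} at $t_0 = t_1$, the first $n+1$ coefficients vanish, so $(t - t_1)^{n+1}$ divides $p$. Write $p = (t-t_1)^{n+1} q$. Expanding at $t_2$ in the same way shows that $(t - t_2)^{n+1}$ divides $p$. Since $t_1 \neq t_2$, the factors $(t-t_1)^{n+1}$ and $(t-t_2)^{n+1}$ are coprime, so $(t - t_2)^{n+1}$ divides $q$. Alternatively, one can argue directly by induction with the Leibniz rule that $q^{(j)}(t_2) = 0$ for $j \le n$. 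Either way $(t-t_1)^{n+1}(t-t_2)^{n+1}$ divides $p$. That product has degree $2n+2 > 2n+1 \ge \deg p$, which forces $p = 0$.

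Thus $\Phi$ is an isomorphism, and the functionals form a basis of the dual space. Theorem~\ref{th-schemegeneral} then gives existence and uniqueness of $p_*$. The only step with real content is the divisibility argument, in particular passing from vanishing derivatives at $t_2$ to divisibility of the cofactor $q$. I expect this to be the main point requiring care, and the plan handles it by the coprimality argument, since both factorisations follow from the Taylor expansion.
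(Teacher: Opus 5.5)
Your proof is correct and follows essentially the paper's route: both reduce the theorem to showing that $\delta^0_{t_1},\dots,\delta^n_{t_1},\delta^0_{t_2},\dots,\delta^n_{t_2}$ form a basis of $\mathbb{R}^*_{2n+1}[t]$, because a polynomial annihilated by all of them is divisible by $(t-t_1)^{n+1}(t-t_2)^{n+1}$, which has degree $2n+2$. You phrase this as triviality of $\ker\Phi$ rather than through the paper's splitting into $R_1$, $R_2$ and Lemma~\ref{lemma-V+V**}, and your coprimality (or Leibniz) step justifies the divisibility claim that the paper only asserts.
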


This theorem immediately implies the well-posedness of the interpolation problem defined by the pointwise interpolation scheme
\[
    \{\underbrace{t_1, t_2 \mid t_1, t_2 \mid \ldots \mid t_1, t_2}_{n\ \text{times}} \mid \mathbb{R}_{2n+1}[t]\}.
\]

For practical computation of the polynomial $p_*(\cdot)$ we will use the following corollary.

\begin{corollary}
    The polynomial $p_*(\cdot)$ satisfies
    \begin{equation}\label{eq-p_*Rn}
        p_*(t) = 
        \sum_{k=0}^{n} \alpha_{1,k}\, p_{t_1,k}(t)
        + 
        \sum_{k=0}^{n} \alpha_{2,k}\, p_{t_2,k}(t).
    \end{equation}
\end{corollary}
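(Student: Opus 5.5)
The plan is to read the polynomials $p_{t_i,k}$ in \eqref{eq-p_*Rn} as the basis of $\mathbb{R}_{2n+1}[t]$ dual to the family of functionals $\delta^k_{t_1},\delta^k_{t_2}$, $k=0..n$. Under that reading the corollary follows from Theorem~\ref{th-Rnt1t2} by linearity, in the same way the Corollary after Theorem~\ref{th-schemegeneral} follows from that theorem.

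\textbf{Constructing the basis.} For each $i\in\{1,2\}$ and $k\in\{0,\dots,n\}$ I apply Theorem~\ref{th-Rnt1t2} to the data $\alpha_{i,k}=1$ with all other $\alpha$'s equal to zero. This gives a unique polynomial $p_{t_i,k}\in\mathbb{R}_{2n+1}[t]$ satisfying
\[
    \delta^{m}_{t_j}(p_{t_i,k}) =
    \begin{cases} 1, & (j,m)=(i,k),\\ 0, & \text{otherwise.}\end{cases}
\]
This is exactly the biorthogonality condition~\eqref{eq-biort}. I do not strictly need to check that these $2n+2$ polynomials form a basis, but it is immediate. If $\sum c_{i,k}p_{t_i,k}=0$, applying $\delta^k_{t_i}$ gives $c_{i,k}=0$. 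Since $\dim\mathbb{R}_{2n+1}[t]=2n+2$, they form a basis. The functionals $\delta^k_{t_i}$ are then its dual basis in the sense of Lemma~\ref{lemmaV=V*}.

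\textbf{Checking the conditions.} Put $q(t):=\sum_{k}\alpha_{1,k}p_{t_1,k}(t)+\sum_k\alpha_{2,k}p_{t_2,k}(t)$. For each $j,m$, linearity of $\delta^m_{t_j}$ (Example~\ref{ex-delta}) together with the displayed biorthogonality relations gives
\[
    q^{(m)}(t_j)=\delta^m_{t_j}(q)=\alpha_{j,m}.
\]
So $q\in\mathbb{R}_{2n+1}[t]$ satisfies all $2n+2$ interpolation conditions of Theorem~\ref{th-Rnt1t2}.

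\textbf{Concluding by uniqueness.} The uniqueness part of Theorem~\ref{th-Rnt1t2} then forces $q=p_*$, which is \eqref{eq-p_*Rn}.

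There is no real obstacle here: all the substance lies in Theorem~\ref{th-Rnt1t2}, which is assumed. The only point needing care is to state explicitly that $p_{t_i,k}$ denotes this dual basis, obtained from the theorem with unit data, rather than some independently given family.
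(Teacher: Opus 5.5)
Your proof is correct and is essentially the paper's argument. The paper shows that the functionals $\delta^k_{t_1},\delta^k_{t_2}$ ($k=0,\dots,n$) form a basis of $\mathbb{R}^*_{2n+1}[t]$, takes the dual polynomial basis $p_{t_i,k}$, and reads off $p_*$ as the linear combination with coefficients $\alpha_{i,k}$; you obtain the same dual basis by applying Theorem~\ref{th-Rnt1t2} to unit data and identify the combination with $p_*$ by uniqueness, which spells out the biorthogonality and uniqueness steps the paper leaves implicit.
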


The polynomials $p_{t_i,k}$ appearing in this expression form a dual basis corresponding to a particular family of linear functionals, which will be specified in the proof.

Explicit formulas for the basis polynomials in the most relevant cases $n=0,1,2$ will be provided in Section~\ref{sec-basispol}. A general closed formula will be given later in Section~\ref{sec-generalforma} (see Corollary~\ref{corol-splinesgeneral}).

\medskip

\textit{Proof of Theorem~\ref{th-Rnt1t2} and formula~\eqref{eq-p_*Rn}.}
The key idea is that the linear functionals 
\[
    \delta^0_{t_1}, \dots, \delta^n_{t_1};\ 
    \delta^0_{t_2}, \dots, \delta^n_{t_2}
\]
form a basis of the dual space $\mathbb{R}^*_{2n+1}[t]$, and the rest follows from Lemma~\ref{lemmaV=V*}.

\bigskip

Observe that in $\mathbb{R}^*_{2n+1}[t]$ the subspaces
\[
    R_1 := \langle \delta^0_{t_1}, \dots, \delta^n_{t_1}\rangle,
    \qquad
    R_2 := \langle \delta^0_{t_2}, \dots, \delta^n_{t_2}\rangle
\]
each have dimension $n+1$ by Proposition~\ref{prop-monombas}, and the generators are linearly independent.  
Thus, it suffices to verify that $R_1$ and $R_2$ intersect trivially and then apply Lemma~\ref{lemma-V+V**}.

If a polynomial $p(\cdot)$ satisfies $\varphi(p)=0$ for all $\varphi\in R_2$, then necessarily
\[
    p(t) = p_1(t)\,(t - t_2)^{\,n+1},
    \qquad \deg p_1 < n.
\]
Similarly, any polynomial $q(\cdot)$ vanishing on all functionals in $R_1$ must satisfy
\[
    q(t) = q_1(t)\,(t - t_1)^{\,n+1}.
\]

A nontrivial polynomial of smallest degree satisfying both conditions must be of the form
\[
    C (t - t_1)^{\,n+1} (t - t_2)^{\,n+1}, \qquad C \neq 0,
\]
which has degree $2n+2$. Thus, within $\mathbb{R}_{2n+1}[t]$ the intersection $R_1 \cap R_2$ is trivial, and since both have dimension $n+1$, we conclude that
\[
    \mathbb{R}_{2n+1}[t] = R_1 \oplus R_2.
\]

Moreover, the corresponding polynomial subspaces are dual to $R_1$ and $R_2$, hence they too form a direct sum.

Finally, formula~\eqref{eq-p_*Rn} follows from Lemma~\ref{lemmaV=V*} by choosing as basis elements precisely the dual polynomial bases associated with $R_1$ and $R_2$.
\qed

\bigskip

We emphasize that although the existence of a dual basis has been established for all $n$, the basis polynomials themselves depend on the choice of $n$.  
In particular, the basis arising in $\mathbb{R}_{n+1}[t]$ is not an extension of the basis associated with a smaller polynomial space.

\subsection{Polynomials with prescribed boundary conditions}

As follows from Theorem~\ref{th-Rnt1t2}, for fixed $n$ and prescribed boundary conditions at the points $t_1$ and $t_2$, a solution to the interpolation problem does not exist when the polynomial degree is ``insufficient''. According to the general Theorem~\ref{th-schemegeneral}, when the degree is ``excessive'', uniqueness is lost. This situation is natural, however, and can be analyzed explicitly. We now turn to this analysis.

Consider, for $m \ge 2n+1$, the pointwise interpolation schemes of the form
\begin{equation}\label{schem-Rm}
    \{\underbrace{t_1, t_2 \mid t_1, t_2 \mid \ldots \mid t_1, t_2}_{n\ \text{times}} \mid \mathbb{R}_{m}[t]\}.
\end{equation}

In other words, we consider the class of all polynomials of degree at most $m$ satisfying the boundary conditions
\begin{equation}
    \mathcal{A}
    :=
    \{\,p \in \mathbb{R}[t] \mid p^{(j)}(t_i) = \alpha_{i,j},\ i = 1,2,\ j = 0..n\,\}.
\end{equation}

If at least one boundary value $\alpha_{i,j}$ is nonzero, then the set $\mathcal{A}$ is not a linear subspace. Nevertheless, it is an affine subspace.

\bigskip

We also consider the class of polynomials with homogeneous boundary conditions:
\begin{equation}
    \mathcal{A}_0
    :=
    \{\,p \in \mathbb{R}[t] \mid p^{(j)}(t_i) = 0,\ i = 1,2,\ j = 0..n\,\}.
\end{equation}

\begin{proposition}\label{prop-aff}
    The set $\mathcal{A}_0$ is a subspace of polynomials presented as follows
    \[
	\{(t-t_1)^{n+1}(t-t_2)^{n+1}q(t)\mid q(\cdot)\in \mathbb{R}[t]\}
    \]
    and $\mathcal{A}$ is an affine subspace 
    \[
	\mathcal{A} = p_*+ \mathcal{A}_0
    \]
\end{proposition}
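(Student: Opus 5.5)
The plan splits into two parts: a characterization of $\mathcal{A}_0$ by divisibility, and then the usual "particular solution plus homogeneous solutions" argument, with the particular solution supplied by Theorem~\ref{th-Rnt1t2}.

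\emph{Step 1: $\mathcal{A}_0$ is a subspace.} Each condition $p^{(j)}(t_i)=0$ says that $p$ lies in the kernel of the functional $\delta^j_{t_i}$ of Example~\ref{ex-delta}. Hence $\mathcal{A}_0$ is an intersection of kernels of linear functionals, and is therefore a linear subspace of $\mathbb{R}[t]$.

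\emph{Step 2: vanishing derivatives at one point.} We show that $p^{(j)}(t_i)=0$ for $j=0..n$ holds if and only if $(t-t_i)^{n+1}$ divides $p$.
\begin{enumerate}
\item Expand $p$ in shifted monomials around $t_i$, using the Taylor formula of Example~\ref{prop-polnR} with $n$ replaced by $\deg p$: the coefficient of $(t-t_i)^j$ is $\delta^j_{t_i}(p)/j!$.
\item So the first $n+1$ coefficients vanish exactly when every remaining term contains the factor $(t-t_i)^{n+1}$.
\item The converse follows by the Leibniz rule: every derivative of order at most $n$ of $(t-t_i)^{n+1}q(t)$ still carries a factor $(t-t_i)$, so it vanishes at $t_i$.
\end{enumerate}

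\emph{Step 3: both endpoints.} Since $t_1\neq t_2$, the polynomials $(t-t_1)^{n+1}$ and $(t-t_2)^{n+1}$ are coprime. By unique factorization in $\mathbb{R}[t]$, a polynomial divisible by both is divisible by their product. Together with the converse direction from Step 2, this gives
\[
\mathcal{A}_0=\{(t-t_1)^{n+1}(t-t_2)^{n+1}q(t)\mid q(\cdot)\in\mathbb{R}[t]\}.
\]
This step is the only one needing any care. An elementary alternative to unique factorization: write $p=(t-t_2)^{n+1}p_1$, then differentiate repeatedly at $t_1$ and use $(t_1-t_2)\neq0$ inductively to see that $p_1^{(j)}(t_1)=0$ for $j\le n$.

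\emph{Step 4: the affine structure.} Let $p_*\in\mathbb{R}_{2n+1}[t]\subset\mathbb{R}[t]$ be the polynomial given by Theorem~\ref{th-Rnt1t2}, so $p_*\in\mathcal{A}$.
\begin{enumerate}
\item If $p\in\mathcal{A}$, then by linearity of each $\delta^j_{t_i}$ we get $\delta^j_{t_i}(p-p_*)=\alpha_{i,j}-\alpha_{i,j}=0$, so $p-p_*\in\mathcal{A}_0$.
\item Conversely, if $h\in\mathcal{A}_0$, then $\delta^j_{t_i}(p_*+h)=\alpha_{i,j}+0$, so $p_*+h\in\mathcal{A}$.
\end{enumerate}
Hence $\mathcal{A}=p_*+\mathcal{A}_0$.

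Finally, for the scheme~\eqref{schem-Rm} with degree bound $m$, intersect both sides with $\mathbb{R}_m[t]$. Since $\deg p_*\le 2n+1\le m$, the intersection is $p_*+\{(t-t_1)^{n+1}(t-t_2)^{n+1}q\mid \deg q\le m-2n-2\}$. Its dimension is $m-2n-1$, which is zero exactly when $m=2n+1$, recovering uniqueness in that case.
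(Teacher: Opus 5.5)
Your proof is correct and follows the paper's own route: $\mathcal{A}_0$ is identified through divisibility by $(t-t_1)^{n+1}$ and $(t-t_2)^{n+1}$, and $\mathcal{A}=p_*+\mathcal{A}_0$ comes from the particular-plus-homogeneous argument. You are more careful than the paper in two places: you prove both inclusions, and you justify combining the two divisibilities via coprimality, which the paper leaves implicit. Your closing count of $m-2n-1$ for the dimension inside $\mathbb{R}_m[t]$ is also the right one; the paper's text says $m-2n$, which would contradict uniqueness at $m=2n+1$.
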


Here $p_*$ denotes the polynomial whose existence was established in the proof of Theorem~\ref{th-Rnt1t2}. Of course, any polynomial in $\mathcal{A}$ can be used as the reference element; this is entirely analogous to writing the general solution of a linear system or a differential equation as the sum of a particular solution and the general solution of the associated homogeneous problem.

\begin{proof}
    It is obviously that $\mathcal{A}_0$ is a linear subspace.  
    The requirement that a polynomial and its first $n$ derivatives vanish at $t_1$ implies that $p \in \mathcal{A}_0$ is divisible by $(t - t_1)^{n+1}$.  
    The same applies at $t_2$.  
    Thus we obtain the general form of elements of $\mathcal{A}_0$
    \[
        p(t)\in \mathcal{A} \Leftrightarrow p(t)=(t-t_1)^{n+1}(t-t_2)^{n+1}q(t),
    \]
for some $q(\cdot)\in\mathbb{R}[t]$.

\smallskip

    Now let $p_1, p_2 \in \mathcal{A}$. Then $p_1 - p_2 \in \mathcal{A}_0$.  
    Fixing $p_1 := p_*$, we obtain
    \[
        \forall\, p \in \mathcal{A} \quad \Rightarrow \quad p - p_* \in \mathcal{A}_0.
    \]
\end{proof}

From this we obtain the general form of the affine space $\mathcal{A}$:
\begin{equation}
    \mathcal{A}
    =
    \{\, p_* + (t - t_1)^{n+1} (t - t_2)^{n+1} q(t)
    \mid
    q(\cdot) \in \mathbb{R}[t] \,\}.
\end{equation}

Thus, for interpolation problems of type~\eqref{schem-Rm}, solutions always exist and form an affine space of dimension $m - 2n$.

We also note that the interpolation polynomial obtained in Theorem~\ref{th-Rnt1t2} is the polynomial of minimal degree satisfying the prescribed boundary conditions.

\section{Computation of the basis polynomials}\label{sec-basispol}

Our immediate goal is to demonstrate the practical computation of the basis polynomials and the implementation of formula~\eqref{eq-p_*Rn}.

To construct splines of orders $n = 0, 1, 2$, one needs the values at the endpoints $t_1, t_2$ of the first $0$, $1$, and $2$ derivatives, respectively. Thus, a spline of order $0$ yields a linear approximation, order $1$ yields an approximation by cubic polynomials, and order $2$ yields an approximation by polynomials of degree five.

\subsection{Linear splines}

We begin with the simplest case, namely zero-order splines, $n = 0$.  
This corresponds to a pointwise interpolation scheme of order~0 in the class of linear functions, of the form
\[
    \{t_1, t_2 \mid \mathbb{R}_{1}[t]\}.
\]

We have
\[
    \mathbb{R}^*_{1} = \langle \delta^0_{t_1},\ \delta^0_{t_2} \rangle.
\]

The dual basis consists of linear polynomials $p_1, p_2$ satisfying the duality conditions
\[
    p_1(t_1) = 1,\quad p_1(t_2) = 0;\qquad
    p_2(t_1) = 0,\quad p_2(t_2) = 1.
\]

It is straightforward to verify that
\[
    p_1(t) = \frac{t - t_2}{t_1 - t_2},\qquad
    p_2(t) = \frac{t - t_1}{t_2 - t_1}.
\]

Therefore, the interpolating linear function takes the expected form
\[
    p_*(t)
    =
    \alpha_{1,0}\,\frac{t - t_2}{t_1 - t_2}
    +
    \alpha_{2,0}\,\frac{t - t_1}{t_2 - t_1}.
\]

\subsection{Cubic splines}

The case $n = 1$ is well known in the literature, so we present it only briefly.  
The corresponding pointwise interpolation scheme in the class of polynomial (cubic) functions has the form
\begin{equation}\label{eq-schemecubic}
    \{t_1, t_2 \mid t_1, t_2 \mid \mathbb{R}_{3}[t]\}.
\end{equation}

The dual basis consists of the following polynomials:
\[
    p^{3}_{t_1,0}(t)
    :=
    \frac{(t - t_2)^2\,(-2t + t_2 - 3t_1)}{(t_1 - t_2)^3},
    \qquad
    p^{3}_{t_1,1}(t)
    :=
    \frac{(t - t_2)^2\,(t - t_1)}{(t_1 - t_2)^2},
\]
\[
    p^{3}_{t_2,0}(t)
    :=
    \frac{(t - t_1)^2\,(-2t + t_1 - 3t_2)}{(t_2 - t_1)^3},
    \qquad
    p^{3}_{t_2,1}(t)
    :=
    \frac{(t - t_1)^2\,(t - t_2)}{(t_2 - t_1)^2}.
\]

These basis polynomials satisfy the boundary conditions:
\[
    p^{3}_{t_1,0}(t_1) = 1,\quad 
    \frac{d}{dt}p^{3}_{t_1,0}(t_1) = 0,\quad
    p^{3}_{t_1,0}(t_2) = 0,\quad
    \frac{d}{dt}p^{3}_{t_1,0}(t_2) = 0,
\]
\[
    p^{3}_{t_1,1}(t_1) = 0,\quad 
    \frac{d}{dt}p^{3}_{t_1,1}(t_1) = 1,\quad
    p^{3}_{t_1,1}(t_2) = 0,\quad
    \frac{d}{dt}p^{3}_{t_1,1}(t_2) = 0,
\]
\[
    p^{3}_{t_2,0}(t_2) = 1,\quad 
    \frac{d}{dt}p^{3}_{t_2,0}(t_2) = 0,\quad
    p^{3}_{t_2,0}(t_1) = 0,\quad
    \frac{d}{dt}p^{3}_{t_2,0}(t_1) = 0,
\]
\[
    p^{3}_{t_2,1}(t_2) = 0,\quad 
    \frac{d}{dt}p^{3}_{t_2,1}(t_2) = 1,\quad
    p^{3}_{t_2,1}(t_1) = 0,\quad
    \frac{d}{dt}p^{3}_{t_2,1}(t_1) = 0.
\]

We thus obtain the following result for the interpolation problem defined by~\eqref{eq-schemecubic}.

\begin{proposition}
    The cubic polynomial $p_*(t)$ satisfying
    \[
        p_*(t_1) = \alpha_{1,0},\quad
        p_*'(t_1) = \alpha_{1,1};\qquad
        p_*(t_2) = \alpha_{2,0},\quad
        p_*'(t_2) = \alpha_{2,1}
    \]
    exists, is unique, and is given by
    \[
        p_*(t)
        =
        \alpha_{1,0}\,p^{3}_{t_1,0}(t)
        +
        \alpha_{1,1}\,p^{3}_{t_1,1}(t)
        +
        \alpha_{2,0}\,p^{3}_{t_2,0}(t)
        +
        \alpha_{2,1}\,p^{3}_{t_2,1}(t).
    \]
\end{proposition}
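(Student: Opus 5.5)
The plan is to split the statement into two parts: existence and uniqueness, and the explicit formula. Existence and uniqueness are the case $n=1$ of Theorem~\ref{th-Rnt1t2}. There, the four functionals $\delta^0_{t_1},\delta^1_{t_1},\delta^0_{t_2},\delta^1_{t_2}$ were shown to form a basis of $\mathbb{R}_3^*[t]$, and $\dim\mathbb{R}_3[t]=4$. So Theorem~\ref{th-schemegeneral} applies to the scheme~\eqref{eq-schemecubic} and gives a unique cubic $p_*$.

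For the formula, I will use the corollary to Theorem~\ref{th-schemegeneral}. If $\{p_i\}$ is the basis of $\mathbb{R}_3[t]$ dual to these four functionals, then $p_*=\sum\alpha_i p_i$. So it suffices to check that the four displayed polynomials $p^3_{t_i,k}$ satisfy the biorthogonality conditions~\eqref{eq-biort} listed before the statement. Each has degree at most $3$, so once biorthogonality holds, it is automatically the dual basis by Lemma~\ref{lemmaV=V*}. Applying each functional $\delta^k_{t_j}$ to the proposed sum then returns exactly $\alpha_{j,k}$, and uniqueness identifies this sum with $p_*$.

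Checking the conditions is done factor by factor, and the factorized form makes the conditions at the opposite node immediate. For instance, $p^3_{t_1,0}$ and $p^3_{t_1,1}$ contain $(t-t_2)^2$, so they and their first derivatives vanish at $t_2$. Likewise, $p^3_{t_1,1}=(t-t_2)^2(t-t_1)/(t_1-t_2)^2$ vanishes at $t_1$, and its derivative there equals $(t_1-t_2)^2/(t_1-t_2)^2=1$. The case $t_2$ follows by the symmetry $t_1\leftrightarrow t_2$.

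The main obstacle is the value-type basis polynomial $p^3_{t_1,0}$, whose linear cofactor must be normalized correctly. Rather than trust the printed cofactor, I will derive it. I write $p^3_{t_1,0}=(t-t_2)^2(at+b)$ and impose $p(t_1)=1$, $p'(t_1)=0$, which is a $2\times 2$ linear system. Its solution is
\[
    p^3_{t_1,0}(t)=\frac{(t-t_2)^2\,(2t+t_2-3t_1)}{(t_2-t_1)^3}.
\]
Check: at $t=t_1$ this gives $(t_1-t_2)^2(t_2-t_1)/(t_2-t_1)^3=1$, and the derivative at $t_1$ is $2(t_1-t_2)(t_2-t_1)+2(t_1-t_2)^2=0$.

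I expect the displayed expressions for $p^3_{t_1,0}$ and $p^3_{t_2,0}$ to need correcting to this form. As printed, $p^3_{t_1,0}(t_1)=(t_2-5t_1)/(t_1-t_2)$, which is not $1$ in general. After this correction, the proposition follows directly from the corollary to Theorem~\ref{th-schemegeneral}.
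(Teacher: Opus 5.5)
Your proof is correct and follows the paper's route: the paper likewise takes existence and uniqueness from the $n=1$ case of Theorem~\ref{th-Rnt1t2}, verifies the biorthogonality conditions for the four displayed cubics, and reads off $p_*$ as the linear combination given by the corollary to Theorem~\ref{th-schemegeneral}. Your correction is also right. As printed, $p^{3}_{t_1,0}$ and $p^{3}_{t_2,0}$ violate the listed conditions; for instance $p^{3}_{t_1,0}(t_1)=(t_2-5t_1)/(t_1-t_2)$. The valid value-type basis cubics are $(t-t_2)^2(2t+t_2-3t_1)/(t_2-t_1)^3$ and $(t-t_1)^2(2t+t_1-3t_2)/(t_1-t_2)^3$, so your derivation is exactly what makes the argument go through.
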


A more detailed analysis of the case $n = 2$ will be given in the next section.

\subsection{Second-order splines}\label{sec-splines2ord}

We now demonstrate that computing the fifth-degree basis polynomials for splines of order $n = 2$ is not substantially more difficult than in the cases $n = 1$ or $n = 0$.

Second-order Hermite splines correspond to the pointwise interpolation scheme
\[
    \{t_1, t_2 \mid t_1, t_2 \mid t_1, t_2 \mid \mathbb{R}_{5}[t]\}.
\]

That polynomials of degree at most $5$ suffice for this interpolation problem follows directly from Theorem~\ref{th-Rnt1t2}.

\bigskip

We therefore seek a family of polynomials
\[
    p_{t_i,k}(t), \qquad i = 1,2;\ k = 0,1,2,
\]
satisfying the duality conditions~\eqref{eq-biort}.  
That is, we require
\begin{equation}\label{bi-property-Rn}
    \delta^k_{t_i}\!\left(p_{t_j,l}\right)
    =
    \begin{cases}
        1, & \text{if } i = j,\ k = l,\\[4pt]
        0, & \text{otherwise}.
    \end{cases}
\end{equation}

\subsubsection*{The case $k = 2$}

From condition~\eqref{bi-property-Rn} we obtain the requirements
\begin{align*}
    p_{t_1,2}(t_1) = p'_{t_1,2}(t_1) = 0,\\
    p''_{t_1,2}(t_1) = 1,\\
    p_{t_1,2}(t_2) = p'_{t_1,2}(t_2) = p''_{t_1,2}(t_2) = 0.
\end{align*}

It is easy to see that the desired polynomial must be of the form
\[
    p_{t_1,2}(t)
    =
    \frac{(t - t_1)^2 (t - t_2)^3}{\bigl[(t - t_1)^2 (t - t_2)^3\bigr]''\big|_{t = t_1}}.
\]

This yields the basis polynomial
\begin{equation}\label{p_{t_1,2}}
    p_{t_1,2}(t)
    =
    \frac{(t - t_1)^2 (t - t_2)^3}{2 (t_1 - t_2)^3}.
\end{equation}

\subsubsection*{The case $k = 1$}

For the polynomial $p_{t_1,1}(\cdot)$, condition~\eqref{bi-property-Rn} takes the form
\begin{align*}
    p_{t_1,1}(t_1) = p''_{t_1,1}(t_1) = 0,\\
    p'_{t_1,1}(t_1) = 1,\\
    p_{t_1,1}(t_2) = p'_{t_1,1}(t_2) = p''_{t_1,1}(t_2) = 0.
\end{align*}

To satisfy the conditions 
\[
p_{t_1,1}(t_1) = p_{t_1,1}(t_2) = p'_{t_1,1}(t_2) = p''_{t_1,1}(t_2) = 0,
\]
the polynomial $p_{t_1,1}(\cdot)$ must be of the form
\[
    p_{t_1,1}(t) = (a t + b)\,(t - t_1)(t - t_2)^3,
\]
where \(a\) and \(b\) are unknown parameters to be determined from the remaining conditions  
\(p'_{t_1,1}(t_1) = 1\) and \(p''_{t_1,1}(t_1) = 0\).

This yields the system of equations:
\[
\begin{cases}
    (t_1 - t_2)^3\,(a t_1 + b) = 1,\\[4pt]
    2 (t_1 - t_2)^2\,\bigl(3 a t_1 + a(t_1 - t_2) + 3 b \bigr) = 0.
\end{cases}
\]

Solving the system gives
\[
    a = -\dfrac{3}{(t_1 - t_2)^4},
    \qquad
    b = \dfrac{4 t_1 - t_2}{(t_1 - t_2)^4}.
\]

Therefore,
\begin{equation}\label{p_{t_1,1}}
    p_{t_1,1}(t)
    =
    \frac{-3 t + 4 t_1 - t_2}{(t_1 - t_2)^4}\,
    (t - t_1)(t - t_2)^3.
\end{equation}

\subsubsection*{The case $k = 0$}

For the polynomial \(p_{t_1,0}(\cdot)\), condition~\eqref{bi-property-Rn} takes the form
\begin{align*}
    p_{t_1,0}(t_1) = 1,\\
    p'_{t_1,0}(t_1) = p''_{t_1,0}(t_1) = 0,\\
    p_{t_1,0}(t_2) = p'_{t_1,0}(t_2) = p''_{t_1,0}(t_2) = 0.
\end{align*}

To satisfy the conditions  
\[
p_{t_1,0}(t_2) = p'_{t_1,0}(t_2) = p''_{t_1,0}(t_2) = 0,
\]
the polynomial \(p_{t_1,0}(\cdot)\) must be of the form
\[
    p_{t_1,0}(t) = (q t^{2} + r t + s)(t - t_2)^{3}.
\]

From the remaining conditions  
\(p_{t_1,0}(t_1) = 1\) and \(p'_{t_1,0}(t_1) = p''_{t_1,0}(t_1) = 0\),  
we obtain a system determining the parameters \(q, r, s\):
\[
\begin{cases}
(t_1 - t_2)^{3}\,(q t_1^{2} + r t_1 + s) = 1, \\[6pt]
(t_1 - t_2)^{2}\,\bigl(3 q t_1^{2} + 3 r t_1 + 3 s + (t_1 - t_2)(2 q t_1 + r)\bigr) = 0,\\[6pt]
2 (t_1 - t_2)\,\bigl(3 q t_1^{2} + q (t_1 - t_2)^{2} + 3 r t_1 + 3 s + 3 (t_1 - t_2)(2 q t_1 + r)\bigr) = 0.
\end{cases}
\]

Solving this system yields:
\[
q = \frac{6}{(t_1 - t_2)^{5}}, 
\qquad
r = -\frac{3(5 t_1 - t_2)}{(t_1 - t_2)^{5}}, 
\qquad
s = \frac{10 t_1^{2} - 5 t_1 t_2 + t_2^{2}}{(t_1 - t_2)^{5}}.
\]

Thus the basis polynomial is
\begin{equation}\label{p_{t_1,0}}
     p_{t_1,0}(t)
     =
     \frac{6 t^{2} - 3 t(5 t_1 - t_2) + 10 t_1^{2} - 5 t_1 t_2 + t_2^{2}}
          {(t_1 - t_2)^{5}}
     (t - t_2)^{3}.
\end{equation}

\begin{figure}[h!]
	\label{fig-polynbas}
    \centering
    \includegraphics[scale=0.5]{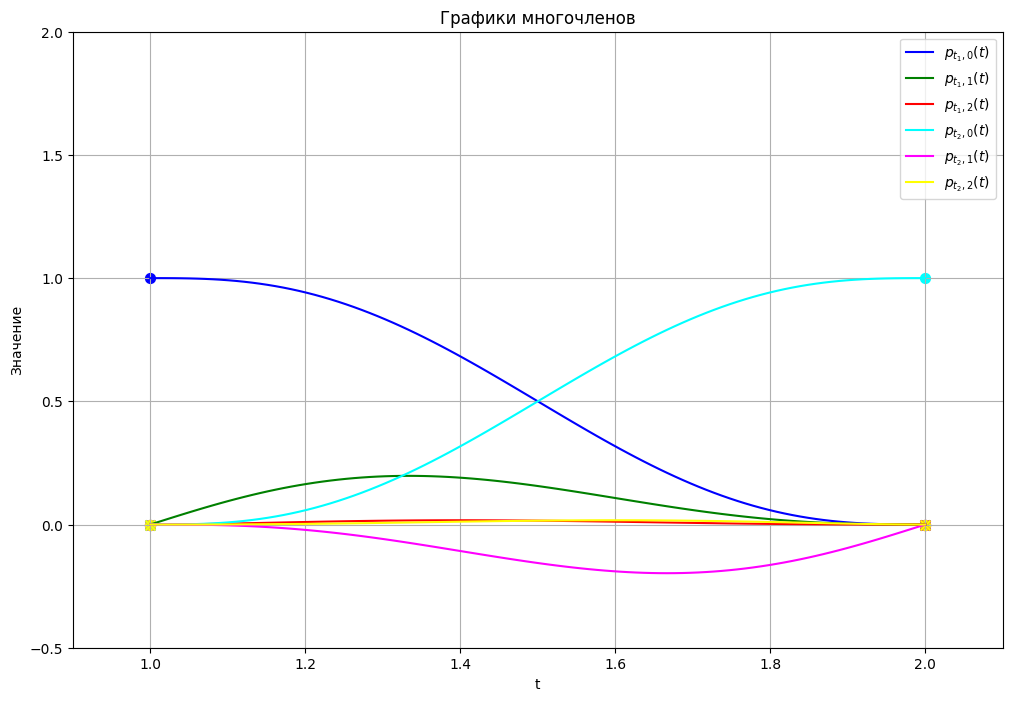}
    \caption{Plots of the basis polynomials for $t_1 = 1$, $t_2 = 2$.}
    \label{fig:basicpolRn}
\end{figure}

\subsubsection*{Symmetric polynomials \(p_{t_2,0}, p_{t_2,1}, p_{t_2,2}\)}

The polynomials \(p_{t_2,0}, p_{t_2,1}, p_{t_2,2}\) are obtained from the formulas for  
\(p_{t_1,0}, p_{t_1,1}, p_{t_1,2}\) by interchanging the roles of \(t_1\) and \(t_2\).  
Thus we obtain:
\begin{align}\label{pt_2-0}
     p_{t_2,0}(t)
     &= \frac{
         6 t^{2} - 3 t (5 t_2 - t_1)
         + 10 t_2^{2} - 5 t_2 t_1 + t_1^{2}
     }{(t_2 - t_1)^{5}}\, (t - t_1)^{3},
     \\[6pt]\label{pt_2-1}
     p_{t_2,1}(t)
     &= \frac{-3 t + 4 t_2 - t_1}{(t_2 - t_1)^{4}}
        (t - t_2)(t - t_1)^{3},
     \\[6pt]\label{pt_2-2}
     p_{t_2,2}(t)
     &= \frac{(t - t_2)^{2}(t - t_1)^{3}}
        {2 (t_2 - t_1)^{3}}.
\end{align}

Thus, we have completed the calculation of the basis polynomials. They form a fairly characteristic sigmoid shape (see figure 4.1). 

\subsection{Interpolation in Three-Dimensional Space}

Let us again consider a system of points \(\{t_i\}\), and seek a vector–valued function  
\[
    r_*(t) = (x_*(t), y_*(t), z_*(t))
\]
such that, for given vectors \(\bar{\alpha}_{i,k} \in \mathbb{R}^3\), \(k = 0..n\), the conditions  
\[
    r_*^{(k)}(t_i) = (x_*^{(k)}(t_i),\, y_*^{(k)}(t_i),\, z_*^{(k)}(t_i)) = \bar{\alpha}_{i,k}
\]
are satisfied.

We rewrite these conditions componentwise:
\[
    x_*^{(k)}(t_i) = \mathrm{pr}_x\, \bar{\alpha}_{i,k}, \qquad k = 0..n,
\]
\[
    y_*^{(k)}(t_i) = \mathrm{pr}_y\, \bar{\alpha}_{i,k}, \qquad k = 0..n,
\]
\[
    z_*^{(k)}(t_i) = \mathrm{pr}_z\, \bar{\alpha}_{i,k}, \qquad k = 0..n,
\]
where \(\mathrm{pr}_{\cdot}\) denotes projection of a vector onto the corresponding coordinate axis.

By Theorem~\ref{th-Rnt1t2}, on each interval \([t_i, t_{i+1}]\) the functions \(x_*(t)\), \(y_*(t)\), \(z_*(t)\) may be chosen independently as polynomials of degree \(2n+1\) satisfying the respective boundary conditions.

Thus the interpolating vector function \(r_*(t)\) on the interval \([t_i, t_{i+1}]\), for the given boundary data \(\bar{\alpha}_{i,k}\), has the form
\[
    r_*(t)
    =
    \bigl(
        p_{t_i,t_{i+1};\, \mathrm{pr}_x(\alpha_i)}(t),\,
        p_{t_i,t_{i+1};\, \mathrm{pr}_y(\alpha_i)}(t),\,
        p_{t_i,t_{i+1};\, \mathrm{pr}_z(\alpha_i)}(t)
    \bigr),
\]
where \(p_{t_i,t_{i+1};\, \cdot}(t)\) denotes the interpolating polynomial on \([t_i,t_{i+1}]\) with the specified boundary conditions.

Such coordinatewise formulas are standard in kinematic applications, but of course the construction extends verbatim to interpolation problems in arbitrary dimension.

\section{General Formula for the Dual Basis}\label{sec-generalforma}

We now generalize the approach used above for constructing a dual basis.  
Fix the following data:
\begin{itemize}
    \item some basis of the space \(F\) (for instance, the monomial basis in the case \(F\) is a polynomial space);
    \item a basis of linear functionals in the dual space \(F^*\), as in Theorem~\ref{th-schemegeneral}.
\end{itemize}

Our goal is to obtain an explicit expression for the dual basis in \(F\) written in terms of the chosen basis of \(F\).

We emphasize that the resulting general formula does not always provide the most efficient method for computing the functions of the dual basis. Difficulties arise when the interpolation scheme contains a large number of points—for example, in the case of the Lagrange interpolation polynomial, or for the scheme considered in Section~\ref{sec-zou25}.

Nevertheless, for many applied interpolation problems the dual basis can indeed be computed in a straightforward way using this approach, as will be demonstrated below.

We also recall that the construction of the dual basis for a given interpolation scheme is performed only once.

\subsection{Formula}

Fix some basis of the space \(V\):
\[
    \mathbf{v} := \{ v_1, \dots, v_n \}.
\]
Let \(V^*\) contain a basis relevant for the interpolation problem (as in Theorem~\ref{th-schemegeneral}):
\[
    \varepsilon := \{ \varepsilon_1, \dots, \varepsilon_n \}.
\]

Define the matrix
\begin{equation}\label{eq-matA}
\mathbf{A}_{\varepsilon,\mathbf{v}} :=
\begin{pmatrix}
\varepsilon_1(v_1) & \varepsilon_1(v_2) & \dots & \varepsilon_1(v_n)\\
\varepsilon_2(v_1) & \varepsilon_2(v_2) & \dots & \varepsilon_2(v_n)\\
\vdots & \vdots & \ddots & \vdots\\
\varepsilon_n(v_1) & \varepsilon_n(v_2) & \dots & \varepsilon_n(v_n)
\end{pmatrix}.
\end{equation}

\begin{theorem}\label{th-basiscalc}
In the basis \(\mathbf{v}\), the coordinate column of the basis vector \(e_i\), dual to \(\varepsilon_i\), is the \(i\)-th column of the matrix \(\mathbf{A}_{\varepsilon,\mathbf{v}}^{-1}\).
\end{theorem}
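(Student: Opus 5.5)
The plan is to write the unknown dual vectors in coordinates and translate the biorthogonality condition~\eqref{eq-biort} into a single matrix identity. Since $\varepsilon$ is a basis of $V^*$ (as in Theorem~\ref{th-schemegeneral}), Lemma~\ref{lemmaV=V*} guarantees a unique basis $e_1,\dots,e_n$ of $V$ with $\varepsilon_i(e_j)=\delta_{ij}$. I will expand each $e_j$ in the fixed basis $\mathbf{v}$ as
\[
    e_j=\sum_{k=1}^n c_{kj}\,v_k ,
\]
and collect the coordinate columns into the matrix $C=(c_{kj})$, whose $j$-th column is the coordinate column of $e_j$.

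Next I will apply $\varepsilon_i$ and use linearity:
\[
    \varepsilon_i(e_j)=\sum_{k=1}^n \varepsilon_i(v_k)\,c_{kj}=(\mathbf{A}_{\varepsilon,\mathbf{v}}C)_{ij}.
\]
So the duality condition is equivalent to the identity $\mathbf{A}_{\varepsilon,\mathbf{v}}C=I$. This is exactly the remark after~\eqref{eq-funct-string}: the value of a functional on a vector is the product of the functional's coordinate row with the vector's coordinate column. The $i$-th row of $\mathbf{A}_{\varepsilon,\mathbf{v}}$ is the coordinate row of $\varepsilon_i$ with respect to the basis dual to $\mathbf{v}$, by~\eqref{eq-functionaldecomp}.

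The only point needing care is that $\mathbf{A}_{\varepsilon,\mathbf{v}}$ is invertible. Its rows are the coordinate rows of $\varepsilon_1,\dots,\varepsilon_n$ in the basis of $V^*$ dual to $\mathbf{v}$. Because the $\varepsilon_i$ form a basis of $V^*$, these rows are linearly independent, so the square matrix is nonsingular. This is the same argument as the Vandermonde step in Proposition~\ref{prop-lagrdualbasis}. Hence $C=\mathbf{A}_{\varepsilon,\mathbf{v}}^{-1}$, and reading off the $i$-th column gives the claim.

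Uniqueness of the dual basis then follows either from the uniqueness of the matrix inverse or from Lemma~\ref{lemmaV=V*}. I expect no real obstacle beyond keeping the row/column index conventions straight, so that the result is columns of $\mathbf{A}^{-1}$ rather than rows.
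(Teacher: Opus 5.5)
Your proposal is correct and is essentially the paper's proof. Both expand the dual vectors in $\mathbf{v}$ and read the biorthogonality conditions as a linear system with coefficient matrix $\mathbf{A}_{\varepsilon,\mathbf{v}}$; the only cosmetic difference is that you stack all $n$ right-hand sides into the single identity $\mathbf{A}_{\varepsilon,\mathbf{v}}C=I$, while the paper solves for each $e_i$ separately. Your justification of invertibility, that the rows are the coordinate rows of the independent functionals $\varepsilon_i$, is more explicit than the paper's one-line remark. It also makes the appeal to Lemma~\ref{lemmaV=V*} for a priori existence of the $e_j$ unnecessary, since $C=\mathbf{A}_{\varepsilon,\mathbf{v}}^{-1}$ produces them directly.
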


\begin{proof}
We seek vectors \(e_i\) forming a basis dual to \(\varepsilon_i\), written in terms of the basis vectors \(v_j\). Thus each vector has the form
\[
    e_i = \sum_{j=1}^n \lambda_i^j v_j.
\]

The duality condition for the vector \(e_i\) gives the system
\[
\begin{cases}
\varepsilon_1(v_1)\lambda_i^1 + \varepsilon_1(v_2)\lambda_i^2 + \dots + \varepsilon_1(v_n)\lambda_i^n = \delta_i^1,\\
\varepsilon_2(v_1)\lambda_i^1 + \varepsilon_2(v_2)\lambda_i^2 + \dots + \varepsilon_2(v_n)\lambda_i^n = \delta_i^2,\\
\qquad \vdots\\
\varepsilon_n(v_1)\lambda_i^1 + \varepsilon_n(v_2)\lambda_i^2 + \dots + \varepsilon_n(v_n)\lambda_i^n = \delta_i^n.
\end{cases}
\]

Observe that the coefficients of this system depend only on the chosen functionals and the basis \(\mathbf{v}\), not on the index \(i\).

The coefficient matrix of this system is precisely the matrix \(\mathbf{A}_{\varepsilon,\mathbf{v}}\) defined in~\eqref{eq-matA}.  
Since both \(\mathbf{v}\) and \(\varepsilon\) are bases, this matrix is invertible. Thus
\[
    e_i = \mathbf{A}_{\varepsilon,\mathbf{v}}^{-1}
    \begin{pmatrix}
        \delta_i^1\\ \delta_i^2\\ \vdots\\ \delta_i^n
    \end{pmatrix},
\]
which proves the claim.
\end{proof}

\bigskip

In suitable choices of functional bases, the matrix \(\mathbf{A}_{\varepsilon,\mathbf{v}}\) becomes a Wronskian matrix or a Jacobian.  
For Hermite splines, applying Theorem~\ref{th-basiscalc} produces the following structure.

For an arbitrary spline order \(n\), the matrix from~\eqref{eq-matA} becomes
\begin{equation}\label{eq-CVM}
\mathbf{A}_n :=
\begin{pmatrix}
1 & t_1 & t_1^2 & \dots & t_1^{2n-1}\\[2pt]
0 & 1 & 2t_1 & \dots & \dfrac{(2n-1)!}{(2n-2)!}\,t_1^{2n-2}\\[2pt]
0 & 0 & 0 & \dots & \dfrac{(2n-1)!}{(n-1)!}\,t_1^{n-1}\\[2pt]
1 & t_2 & t_2^2 & \dots & t_2^{2n-1}\\[2pt]
0 & 1 & 2t_2 & \dots & \dfrac{(2n-1)!}{(2n-2)!}\,t_2^{2n-2}\\[2pt]
0 & 0 & 0 & \dots & \dfrac{(2n-1)!}{(n-1)!}\,t_2^{n-1}
\end{pmatrix}.
\end{equation}

This is the so-called confluent Vandermonde matrix.  
Its history, inversion methods, and connection with Hermite splines are given in~\cite{respondek2024,li2025confluentvandermondematrixrelated}.

\begin{corollary}\label{corol-splinesgeneral}
The columns of the matrix \(\mathbf{A}_n^{-1}\) are precisely the coefficients of the basis polynomials solving the interpolation problem for the scheme of order \(n\) of the form
\[
    \{\underbrace{t_1, t_2 \mid t_1, t_2 \mid \dots \mid t_1, t_2}_{n\ \text{times}} \mid \mathbb{R}_{2n+1}[t]\}.
\]
\end{corollary}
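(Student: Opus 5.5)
The corollary is an instance of Theorem~\ref{th-basiscalc}. We take $V=\mathbb{R}_{2n+1}[t]$ with the monomial basis $\mathbf{v}=\{1,t,\dots,t^{2n+1}\}$, and the functional basis
\[
\varepsilon=\{\delta^0_{t_1},\delta^1_{t_1},\dots,\delta^n_{t_1},\ \delta^0_{t_2},\dots,\delta^n_{t_2}\}
\]
in $\mathbb{R}^*_{2n+1}[t]$, ordered as in the rows of~\eqref{eq-CVM}. We first check that $\mathbf{A}_{\varepsilon,\mathbf{v}}$ coincides with the confluent Vandermonde matrix $\mathbf{A}_n$, with the rows and the size read correctly: it is a $(2n+2)\times(2n+2)$ matrix with columns indexed by $t^0,\dots,t^{2n+1}$. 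The entry in the row of $\delta^k_{t_i}$ and the column of $t^m$ is
\[
\delta^k_{t_i}(t^m)=\frac{m!}{(m-k)!}\,t_i^{m-k}\quad (m\ge k), \qquad 0 \quad (m<k).
\]
This is exactly the confluent Vandermonde pattern displayed in~\eqref{eq-CVM}. Note that the displayed upper index $2n-1$ there should be read as $2n+1$ to match $\dim \mathbb{R}_{2n+1}[t]=2n+2$.

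Next we need the hypotheses of Theorem~\ref{th-basiscalc}, namely that both $\mathbf{v}$ and $\varepsilon$ are bases. For $\mathbf{v}$ this is immediate. For $\varepsilon$ it is precisely what the proof of Theorem~\ref{th-Rnt1t2} establishes: $\mathbb{R}^*_{2n+1}[t]=R_1\oplus R_2$ with $R_i=\langle\delta^0_{t_i},\dots,\delta^n_{t_i}\rangle$. As an independent check, we can argue that $\mathbf{A}_n$ has trivial kernel. A coefficient vector $c$ with $\mathbf{A}_n c=0$ gives a polynomial $p=\sum c_m t^m$ of degree at most $2n+1$ whose derivatives up to order $n$ vanish at $t_1$ and at $t_2$. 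Then $(t-t_1)^{n+1}(t-t_2)^{n+1}$ divides $p$, exactly as in Proposition~\ref{prop-aff}, and since this product has degree $2n+2$, we get $p=0$. Hence $\mathbf{A}_n$ is invertible. This nondegeneracy is the only substantive point, and it is already available from earlier results.

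Finally, Theorem~\ref{th-basiscalc} states that the $j$-th column of $\mathbf{A}_n^{-1}$ is the coordinate column, in the monomial basis, of the polynomial $e_j$ dual to the $j$-th functional. Under the chosen ordering these are the polynomials $p_{t_1,0},\dots,p_{t_1,n},p_{t_2,0},\dots,p_{t_2,n}$ of formula~\eqref{eq-p_*Rn}. So the $m$-th entry of each column is the coefficient of $t^m$. By Theorem~\ref{th-schemegeneral} and its corollary, these polynomials combined with coefficients $\alpha_{i,k}$ give the unique solution of the scheme.

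The main obstacle is therefore bookkeeping rather than mathematics: matching the row order of $\mathbf{A}_n$ with the labelling of the $p_{t_i,k}$, and correcting the matrix size in~\eqref{eq-CVM} to $2n+2$. As a sanity check, we can compare with the cases $n=0,1$, where inverting $\mathbf{A}_n$ should reproduce the linear and cubic basis polynomials computed in Section~\ref{sec-basispol}.
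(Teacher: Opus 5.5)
Your proposal is correct and follows the paper's route. The corollary is simply Theorem~\ref{th-basiscalc} applied to the monomial basis of \(\mathbb{R}_{2n+1}[t]\) and the functionals \(\delta^k_{t_i}\) (\(k=0,\dots,n\), \(i=1,2\)), with invertibility of the confluent Vandermonde matrix \(\mathbf{A}_n\) supplied by Theorem~\ref{th-Rnt1t2}. Your correction of \(\mathbf{A}_n\) to size \((2n+2)\times(2n+2)\) and your direct trivial-kernel argument are welcome additions; that argument needs \(t_1\neq t_2\) so that \((t-t_1)^{n+1}\) and \((t-t_2)^{n+1}\) are coprime, and it is cleaner than the intersection argument in the paper's proof of Theorem~\ref{th-Rnt1t2}.
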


\subsection{Trigonometric polynomials}\label{sec-trigpoly}

We choose nonconstant trigonometric polynomials — specifically, the lowest-order trigonometric functions (excluding the constant) — as the source of interpolants.

It is clear that this choice is not well suited for interpolating constant functions from a numerical perspective. However, symmetry considerations require an even number of linear functionals (values at the interval endpoints), and therefore an even number of basis functions.

Thus we take as the interpolation space the first trigonometric functions:
\begin{equation}\label{eq-T2}
    \mathbb{T}_2[t] := \langle \sin t,\; \sin 2t,\; \cos t,\; \cos 2t \rangle .
\end{equation}

For a pair of points \(t_1,t_2\in[0,2\pi]\) we seek a function \(p_*(t)\) such that
\[
    p_*(t_1)=\alpha_{1,0},\quad p_*'(t_1)=\alpha_{1,1};\qquad
    p_*(t_2)=\alpha_{2,0},\quad p_*'(t_2)=\alpha_{2,1}.
\]

In the sense of Definition~\ref{def-interpscheme} we therefore consider the pointwise interpolation scheme of order~1
\[
    \{t_1,t_2\mid t_1,t_2\mid \mathbb{T}_2[t]\},\qquad t_1-t_2\neq 2\pi k,\; k\in\mathbb{Z}.
\]

Fix the basis in the dual space \(\mathbb{T}_2^*[t]\):
\[
    \mathbb{T}_2^*[t] = \langle \delta^0_{t_1},\; \delta^0_{t_2},\; \delta^1_{t_1},\; \delta^1_{t_2}\rangle.
\]

The matrix \(\mathbf{A}\) from \eqref{eq-matA} then has the form
\[
   \mathbf{A} =
   \begin{pmatrix}
        \sin t_1 & \sin 2t_1 & \cos t_1 & \cos 2t_1 \\
        \sin t_2 & \sin 2t_2 & \cos t_2 & \cos 2t_2 \\
        \cos t_1 & 2\cos 2t_1 & -\sin t_1 & -2\sin 2t_1 \\
        \cos t_2 & 2\cos 2t_2 & -\sin t_2 & -2\sin 2t_2
   \end{pmatrix}.
\]

A direct calculation yields the determinant
\[
    2\det(\mathbf{A}) = -9\cos(t_1-t_2) + \cos\bigl(3(t_1-t_2)\bigr) + 8.
\]
In particular
\[
    0 \le \det(\mathbf{A}) \le 8,
\]
and if \(t_1-t_2\neq 2\pi k\) for \(k\in\mathbb{Z}\) then \(\det(\mathbf{A})>0\), so \(\mathbf{A}\) is invertible. Note that the determinant depends only on the grid step \(t_1-t_2\).

Computing \(\mathbf{A}^{-1}\), multiplying each column by the row \((\sin t,\sin 2t,\cos t,\cos 2t)\) and simplifying (cf. Theorem~\ref{th-basiscalc}), one obtains the dual basis functions \(f_i(t)\). For brevity we display them in compact form:

{\scriptsize
\begin{align*}
f_1(t) &= 
\frac{4\cos(t_1 - t) + 4\cos(2t_1 - 2t) - 6\cos(-2t_1 + t_2 + t) - \cos(t_1 - 3t_2 + 2t) - 3\cos(t_1 + t_2 - 2t) + 2\cos(2t_1 - 3t_2 + t)}{-9\cos(t_1 - t_2) + \cos(3(t_1 - t_2)) + 8}, \\[6pt]
f_2(t) &= \frac{4\cos(t_2 - t) + 4\cos(2t_2 - 2t) - \cos(-3t_1 + t_2 + 2t) + 2\cos(-3t_1 + 2t_2 + t) - 6\cos(t_1 - 2t_2 + t) - 3\cos(t_1 + t_2 - 2t)}{-9\cos(t_1 - t_2) + \cos(3(t_1 - t_2)) + 8}, \\[6pt]
f_3(t) &= \frac{-4\sin(t_1 - t) - 2\sin(2t_1 - 2t) - 3\sin(-2t_1 + t_2 + t) + \sin(t_1 - 3t_2 + 2t) + 3\sin(t_1 + t_2 - 2t) - \sin(2t_1 - 3t_2 + t)}{-9\cos(t_1 - t_2) + \cos(3(t_1 - t_2)) + 8}, \\[6pt]
f_4(t) &= \frac{-4\sin(t_2 - t) - 2\sin(2t_2 - 2t) + \sin(-3t_1 + t_2 + 2t) - \sin(-3t_1 + 2t_2 + t) - 3\sin(t_1 - 2t_2 + t) + 3\sin(t_1 + t_2 - 2t)}{-9\cos(t_1 - t_2) + \cos(3(t_1 - t_2)) + 8}.
\end{align*}
}

We will not provide explicit calculations, as they can be easily performed by the reader. However, we will provide a graph (see figure 5.1) of the resulting functions. Due to the trigonometric nature of the functions, when the parameter $|t_1-t_2|$ is sufficiently large, the sigmoid character that we noted for polynomials (see figure 4.1) is lost.

\begin{figure}[h!]
	\label{fig-trigbasis}
    \centering
    \includegraphics[scale=0.1]{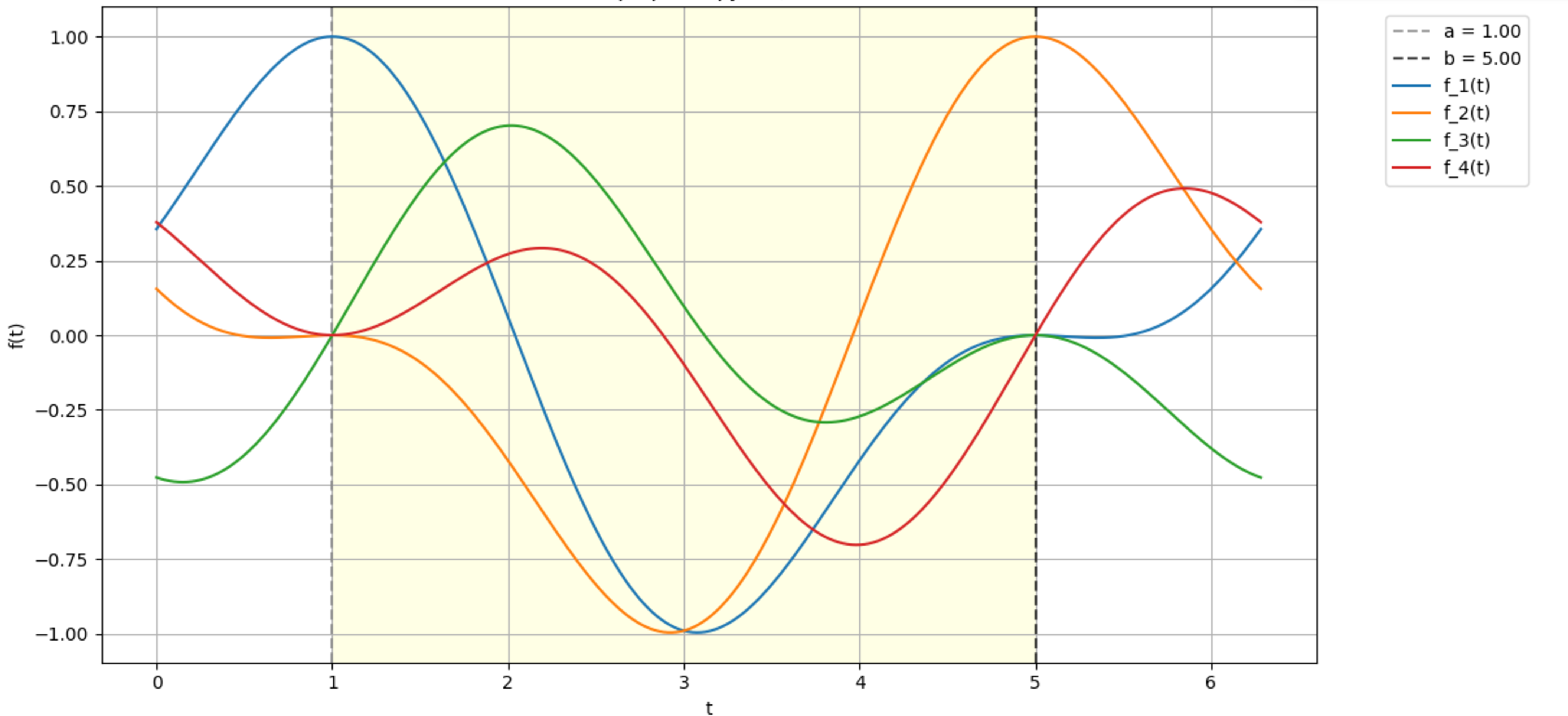}
    \caption{Plots of the functions \(f_i\) for \(t_1=1,\ t_2=5\).}
\end{figure}

By the same argument as in Theorem~\ref{th-Rnt1t2} (via Lemma~\ref{lemmaV=V*}) we obtain:

\begin{theorem}
Let \(t_1,t_2\) satisfy \(t_1-t_2\neq 2\pi k\) for all \(k\in\mathbb{Z}\).  
For any prescribed data \(\alpha_{i,j}\), \(i=1,2\), \(j=0,1\), there exists a unique function \(p_*\in\mathbb{T}_2[t]\) such that
\[
    p_*^{(k)}(t_i)=\alpha_{k,i},\qquad k=0,1,\; i=1,2.
\]
Moreover,
\[
    p_*(t) = \alpha_{1,0} f_1(t) + \alpha_{2,0} f_2(t) + \alpha_{1,1} f_3(t) + \alpha_{2,1} f_4(t),
\]
where the functions \(f_i\) are defined above.
\end{theorem}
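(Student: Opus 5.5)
The plan is to reduce the statement to Theorem~\ref{th-schemegeneral} and Theorem~\ref{th-basiscalc}, so that the only genuinely new input is the invertibility of the matrix $\mathbf{A}$ written above. Here $F=\mathbb{T}_2[t]$. I would first check that $\dim F=4$: the functions $\sin t,\sin 2t,\cos t,\cos 2t$ are linearly independent, for instance by orthogonality in $L^2[0,2\pi]$. This matches the number $N=4$ of interpolation conditions. Every element of $F$ is $C^\infty$, so the functionals $\delta^0_{t_1},\delta^0_{t_2},\delta^1_{t_1},\delta^1_{t_2}$ are well defined elements of $F^*$.

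The key step is to show that these four functionals form a basis of $F^*$. By the coordinate-row description \eqref{eq-funct-string}, the rows of $\mathbf{A}$ are exactly the coordinate rows of the four functionals with respect to the dual basis of $\langle\sin t,\sin 2t,\cos t,\cos 2t\rangle$. So the functionals form a basis if and only if $\det\mathbf{A}\neq 0$.

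I would establish the determinant identity $2\det(\mathbf{A})=8-9\cos\theta+\cos 3\theta$, where $\theta=t_1-t_2$. To keep the computation manageable, I would use translation invariance. The space $\mathbb{T}_2[t]$ is invariant under the shift $t\mapsto t+c$, and the shift acts on the basis by a block-rotation matrix of determinant $1$. Hence $\det\mathbf{A}$ depends only on $\theta$, and one may take $t_2=0$, $t_1=\theta$ before expanding the $4\times4$ determinant.

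Then I would rewrite the identity using $\cos 3\theta=4\cos^3\theta-3\cos\theta$ and $u=\cos\theta$:
\[
8-9u+4u^3-3u=4(u^3-3u+2)=4(u-1)^2(u+2).
\]
This gives $\det\mathbf{A}=2(1-\cos\theta)^2(2+\cos\theta)\ge 0$, with equality only when $\cos\theta=1$, that is, $\theta\in 2\pi\mathbb{Z}$. So under the hypothesis $t_1-t_2\neq 2\pi k$ the matrix $\mathbf{A}$ is invertible.

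With that in hand, Theorem~\ref{th-schemegeneral} gives existence and uniqueness of $p_*\in\mathbb{T}_2[t]$ satisfying the four conditions. Theorem~\ref{th-basiscalc} identifies the dual basis $f_1,\dots,f_4$: $f_i$ is the row $(\sin t,\sin 2t,\cos t,\cos 2t)$ times the $i$-th column of $\mathbf{A}^{-1}$. The corollary following Theorem~\ref{th-schemegeneral} then yields
\[
p_*=\alpha_{1,0}f_1+\alpha_{2,0}f_2+\alpha_{1,1}f_3+\alpha_{2,1}f_4,
\]
because the ordering of functionals $\delta^0_{t_1},\delta^0_{t_2},\delta^1_{t_1},\delta^1_{t_2}$ matches the ordering of the data.

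To confirm the displayed closed forms without inverting $\mathbf{A}$ symbolically, it suffices to check the biorthogonality relations $\delta^{k}_{t_j}(f_i)\in\{0,1\}$ directly. Each $f_i$ visibly lies in $\mathbb{T}_2[t]$, since every numerator term is a sine or cosine of $\pm t$ or $\pm 2t$ plus a constant phase. Uniqueness of the dual basis then forces the formulas to be the ones claimed.

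I expect the main obstacle to be the determinant evaluation and the symbolic verification of the explicit $f_i$. The existence and uniqueness part is immediate once $\det\mathbf{A}>0$ is known. For the $f_i$, I would first verify just one case, say $f_1$, by substituting $t=t_1$ and $t=t_2$ into the function and its derivative and simplifying with sum-to-product formulas. The remaining cases then follow by the symmetry $t_1\leftrightarrow t_2$ and by differentiation.
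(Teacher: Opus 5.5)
Your proposal is correct and follows essentially the paper's route: show $\det\mathbf{A}\neq0$ from the identity $2\det\mathbf{A}=8-9\cos\theta+\cos3\theta$ with $\theta=t_1-t_2$, then invoke Theorem~\ref{th-schemegeneral} and Theorem~\ref{th-basiscalc} to get existence, uniqueness and the expansion in the dual basis $f_1,\dots,f_4$; your factorization $\det\mathbf{A}=2(1-\cos\theta)^2(2+\cos\theta)$ also supplies the proof of positivity, which the paper only asserts. The one loose phrase is ``by differentiation'' in the last step, since $f_1'$ is not $f_3$ but $f_1''(t_1)f_3+f_1''(t_2)f_4$; it is simplest to check the biorthogonality of $f_3$ directly, exactly as for $f_1$ (it does check out), and then obtain $f_2,f_4$ by the swap $t_1\leftrightarrow t_2$.
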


Note that dual basis doesn't exists when \(|t_1-t_2|=2\pi k\). For applications it is important that if \(|t_1-t_2|\to 2\pi k\) then the amplitudes of the basis functions \(f_i\) blow up. Hence, in practice the grid step should be neither too small nor too close to an integer multiple of \(2\pi\) (for example, one may require a step larger than~1 and smaller than \(2\pi-1\)).

\bigskip

An analogous procedure applies to construct interpolating functions for dual bases arising in higher-order pointwise interpolation schemes.

\begin{example}
The interpolation of the function \(h(t)=\sin(5t)+5\) on the grid \(\{1,2,3,4,5\}\) and the function \(h(t)=(t+2)(t-3)(t-7)\) on the grid \(\{0,2,4,6\}\) is presented on figure 5.2.
\begin{figure}[h]
	\label{fig-interpol}
\begin{minipage}[h]{0.49\linewidth}
\centering
\includegraphics[width=1.1\linewidth]{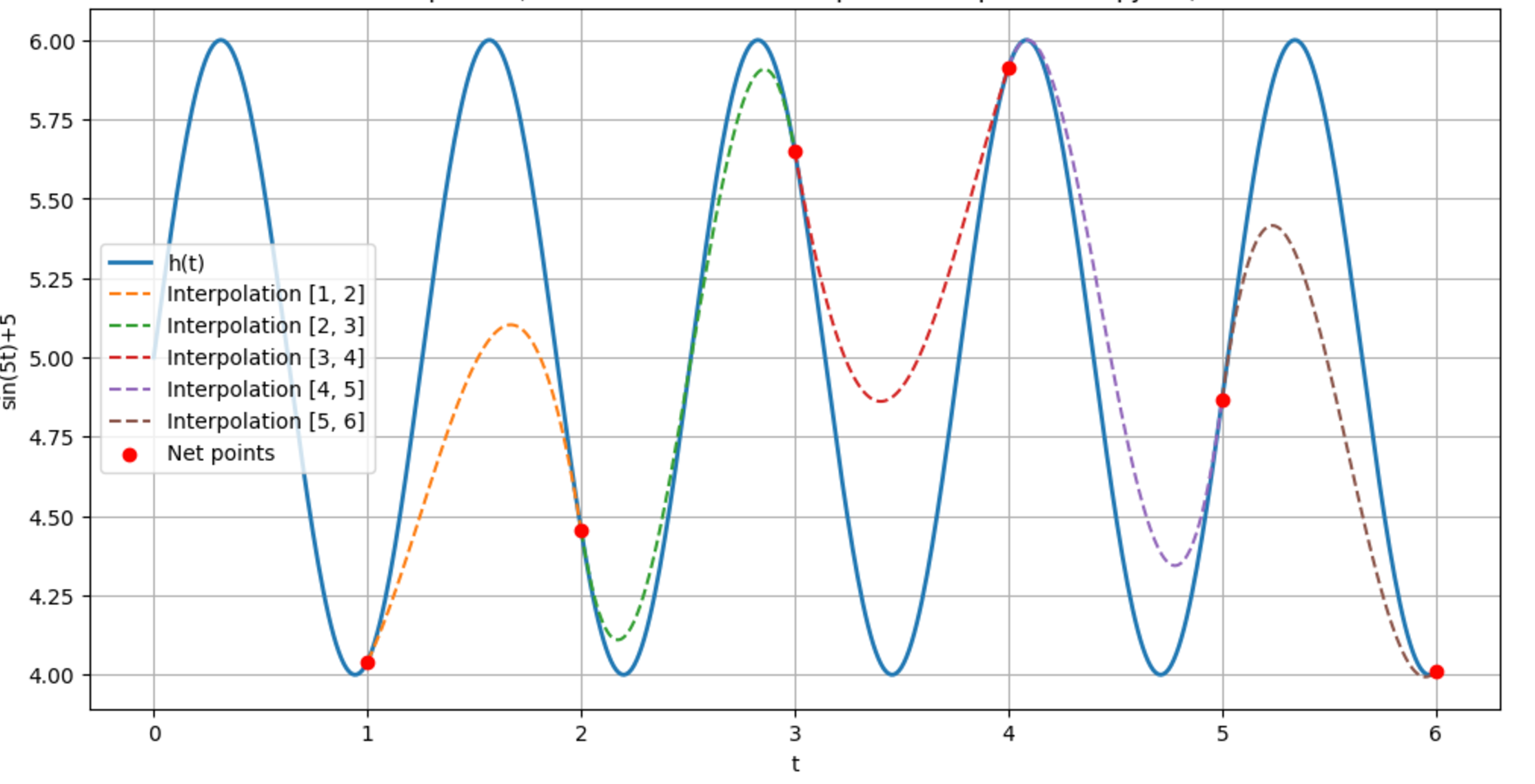} \\ \(h(t)=\sin(5t)+5\)
\end{minipage}
\hfill
\begin{minipage}[h]{0.49\linewidth}
\centering
\includegraphics[width=1.1\linewidth]{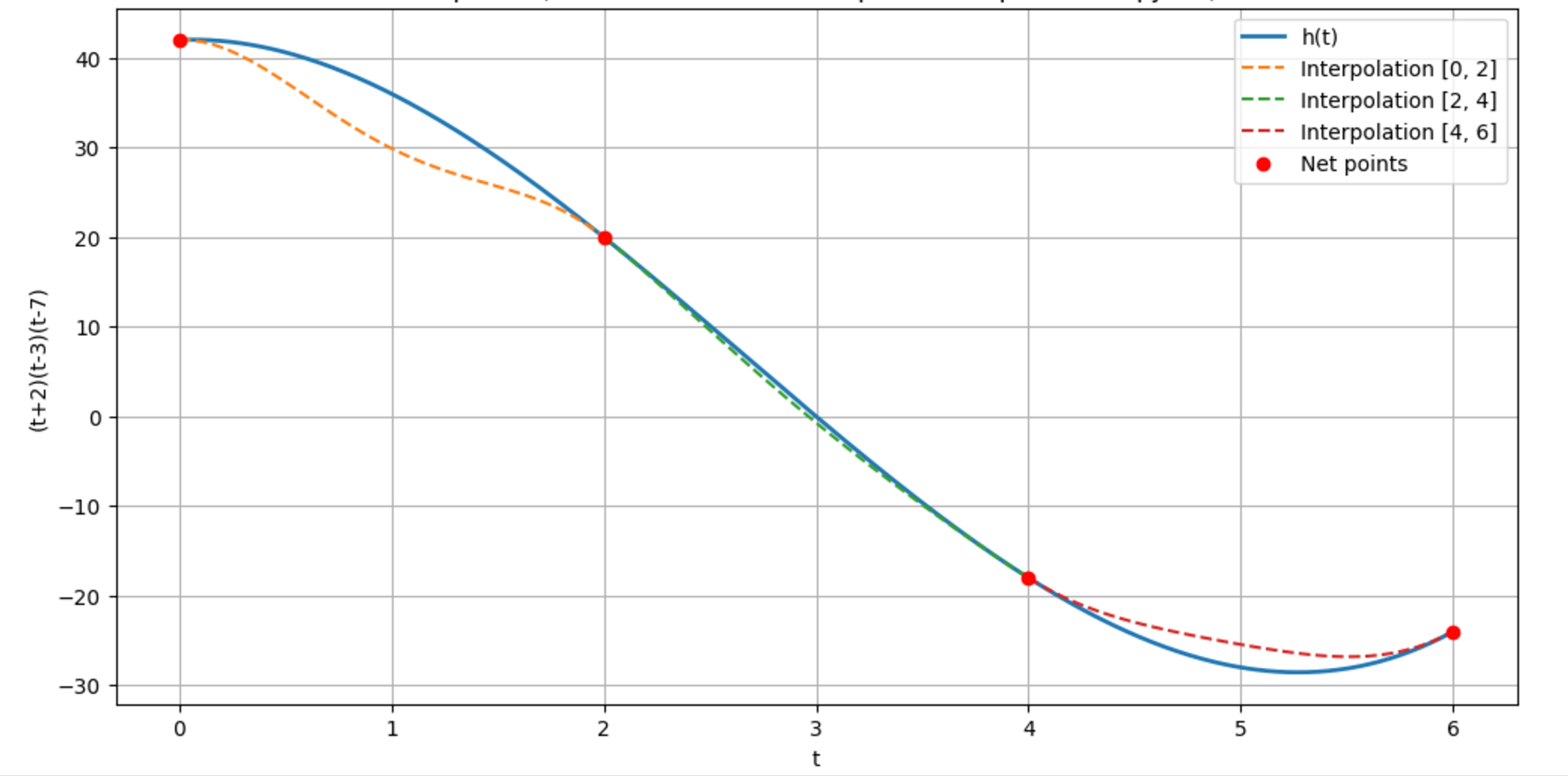} \\ \(h(t)=(t+2)(t-3)(t-7)\)
\end{minipage}
\caption{Interpolations in \(\mathbb{T}_2[t]\).}
\label{ris:image1}
\end{figure}
\end{example}

\section{Other Interpolation Schemes}

\subsection{The Scheme \texorpdfstring{$\{t_1,\ldots, t_n \mid \mathbb{T}[t]\}$}{t1,...,tn | T[t]}}\label{sec-zou25}

Interpolation by means of trigonometric polynomials is a well–known approach. We mention, for example, the algorithm for functions on the torus \cite{KunPot07}. Among recent results in this direction, we highlight the work \cite{zou25triginterpol}.

The cited paper studies interpolation of a periodic function on a uniform grid in the interval \([0,2\pi]\) by partial sums of trigonometric series, with the grid kept finite and uniform (that is, \(|t_i - t_{i+1}|\) does not depend on \(i\)). Under these assumptions, the author proposes a method for constructing the corresponding interpolating function \cite[Algorithm 4.1]{zou25triginterpol}, obtains error estimates \cite[Section 5.3]{zou25triginterpol}, estimates for the expansion coefficients \cite[Theorem 2.2]{zou25triginterpol}, and other results giving a complete description of the interpolation problem under the stated restrictions.

To compare the results of the cited paper with our approach, we restrict ourselves to the case in which the grid consists of an odd number of points. Thus we work with the scheme
\begin{equation}\label{eq-schemezou=odd}
    \{t_1,\ldots, t_{2n+1} \mid \mathbb{T}_n\}, \qquad n>1,\; t_i\in[0,2\pi],
\end{equation}
\[
    \mathbb{T}_n := \langle \sin(kt), \cos(kt) \rangle,\qquad k = 0,\ldots, n.
\]
In contrast with formula \eqref{eq-T2}, we now include the constant function into the space of trigonometric polynomials, so the resulting space has dimension \(2n+1\). Hence, by Lemma \ref{lemmaV=V*}, the linear functionals \(\delta^0_i\), \(i=1,\ldots,2n+1\), form a basis of \(\mathbb{T}_n^*\),
\[
    \mathbb{T}_n^* = \langle \delta^0_1,\ldots,\delta^0_{2n+1} \rangle.
\]

The matrix from Theorem \ref{th-basiscalc} takes the form
\[
\mathbf{A}_z :=
    \begin{pmatrix}
        1 & 1 & 1 & \dots & 1\\
        \sin t_1 & \sin t_2 & \sin t_3 & \ldots & \sin t_{2n+1}\\
        \cos t_1 & \cos t_2 & \cos t_3 & \ldots & \cos t_{2n+1}\\
        & & & \ldots & \\
        \cos(nt_1) & \cos(nt_2) & \cos(nt_3) & \ldots & \cos(nt_{2n+1})
    \end{pmatrix}.
\]

Finding an explicit inverse of this matrix in order to construct the basis polynomials is computationally very unpleasant; thus, as in the case of the Lagrange interpolation polynomial, a direct approach is more natural.

Given the grid \(\{t_i\}_{i=1}^{2n+1}\), consider the functions
\begin{equation}
    z_i^*(t) := \prod_{\substack{j=1\\ j\neq i}}^{2n+1} \sin(t - t_j).
\end{equation}

Each function \(z_i^*(t)\) is a product of trigonometric polynomials, and therefore itself a trigonometric polynomial of the required degree, i.e., \(z_i^*(t) \in \mathbb{T}_n\). Moreover, \(z_i^*(t_j)=0\) whenever \(j\neq i\). We then choose the basis
\[
    z_i(t) := \frac{z_i^*(t)}{z_i^*(t_i)}.
\]

Thus, for the interpolation scheme \eqref{eq-schemezou=odd}, we obtain the following.

\begin{proposition}
    The trigonometric polynomial \(p_*(t)\in \mathbb{T}_n\) satisfying
    \[
        p_*(t_i) = \alpha_i,\qquad i = 1,\ldots, 2n+1,
    \]
    exists, is unique, and has the form
    \begin{equation}\label{eq-trigLagr}
        p_*(t) = \sum_{i=1}^{2n+1} \alpha_i\, z_i(t).
    \end{equation}
\end{proposition}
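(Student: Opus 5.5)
The plan is to imitate the proof of Proposition~\ref{prop-Lagrpoly}: exhibit $2n+1$ functions in $\mathbb{T}_n$ that are biorthogonal to the functionals $\delta^0_{t_1},\dots,\delta^0_{t_{2n+1}}$, and then read off existence, uniqueness and formula~\eqref{eq-trigLagr} from Theorem~\ref{th-schemegeneral} and its corollary. Throughout, the nodes are assumed pairwise distinct modulo $2\pi$; this is the trigonometric analogue of distinct nodes, and it excludes taking both $0$ and $2\pi$.

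\textbf{The first step needs care, and the construction must be adjusted.} A product of $2n$ factors $\sin(t-t_j)$ is a trigonometric polynomial of degree $2n$, not $n$. It can also vanish at $t_i$ when $t_i-t_j=\pi$. I would therefore use the half-angle version
\[
z_i^*(t):=\prod_{j\neq i}\sin\frac{t-t_j}{2}.
\]
Group the $2n$ factors into $n$ pairs and apply the product-to-sum identity
\[
\sin\frac{t-t_j}{2}\,\sin\frac{t-t_k}{2}=\frac12\Bigl(\cos\frac{t_k-t_j}{2}-\cos\bigl(t-\tfrac{t_j+t_k}{2}\bigr)\Bigr).
\]
Each pair is thus a constant plus a first harmonic, so it lies in $\mathbb{T}_1$. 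Since $\mathbb{T}_a\cdot\mathbb{T}_b\subset\mathbb{T}_{a+b}$, by the product formulas for sines and cosines, the product of the $n$ pairs lies in $\mathbb{T}_n$.

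\textbf{Next, verify biorthogonality.} If $j\neq i$, then $z_i^*(t_j)=0$, because the factor with index $j$ vanishes. At $t_i$ itself, each $t_i-t_j$ lies in $(-2\pi,2\pi)\setminus\{0\}$, so $\sin\frac{t_i-t_j}{2}\neq 0$ and hence $z_i^*(t_i)\neq0$. Consequently $z_i:=z_i^*/z_i^*(t_i)$ satisfies $\delta^0_{t_j}(z_i)=\delta_{ij}$.

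\textbf{Then conclude from dimension counting.} Biorthogonality forces both families to be linearly independent. Suppose $\sum_j c_j\delta^0_{t_j}=0$; evaluating on $z_i$ gives $c_i=0$. Since $\dim\mathbb{T}_n=2n+1$, the $z_i$ form a basis of $\mathbb{T}_n$. By Lemma~\ref{lemmaV=V*} the $\delta^0_{t_i}$ form a basis of $\mathbb{T}_n^*$, and $\{z_i\}$ is exactly its dual basis. Theorem~\ref{th-schemegeneral} now gives a unique solution of the scheme~\eqref{eq-schemezou=odd}. The subsequent corollary gives $p_*=\sum_i\alpha_i z_i$, and one can check this directly, since $p_*(t_j)=\sum_i\alpha_i\delta_{ij}=\alpha_j$.

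The main obstacle is the first step: showing the degree bound so that $z_i$ genuinely lies in $\mathbb{T}_n$, together with non-vanishing at $t_i$. The version with $\sin(t-t_j)$ fails both requirements, and the half-angle product with its pairing argument is the fix I would use.
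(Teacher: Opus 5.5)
Your proof is correct. Its skeleton is the paper's: Lagrange-type cardinal functions biorthogonal to the evaluation functionals \(\delta^0_{t_i}\), then Theorem~\ref{th-schemegeneral}. The real difference lies in the cardinal functions themselves, and there your version is the one that works.

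\textbf{The paper's construction fails.} The paper takes \(z_i^*(t)=\prod_{j\neq i}\sin(t-t_j)\) and asserts \(z_i^*\in\mathbb{T}_n\). This is false: the product of \(2n\) first harmonics has leading coefficient \(\prod_j e^{-it_j}/(2i)\neq0\) at \(e^{2int}\), so it has degree exactly \(2n\).
\begin{itemize}
\item Take \(n=1\) with nodes \(0,2\pi/3,4\pi/3\). The paper's recipe gives \(z_1(t)=\tfrac13+\tfrac23\cos 2t\notin\mathbb{T}_1\). Your half-angle product gives \(z_1(t)=\tfrac13+\tfrac23\cos t\), which is the actual cardinal function in \(\mathbb{T}_1\).
\item The paper's \(z_i^*(t_i)\) also vanishes whenever two nodes are \(\pi\) apart (e.g.\ nodes \(0,\pi,3\pi/2\)), so its normalisation breaks down. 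By contrast, \(\sin\frac{t_i-t_j}{2}\neq0\) exactly when \(t_i\not\equiv t_j \pmod{2\pi}\).
\end{itemize}
Read literally, the paper's argument only produces interpolants in \(\mathbb{T}_{2n}\) and does not establish~\eqref{eq-trigLagr} in \(\mathbb{T}_n\). Your half-angle product (Gauss's classical formula), with the pairing argument placing it in \(\mathbb{T}_n\), is what actually makes the proposition true.

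\textbf{Two further gaps you fill.}
\begin{itemize}
\item The paper deduces that the \(\delta^0_{t_i}\) form a basis of \(\mathbb{T}_n^*\) from Lemma~\ref{lemmaV=V*}. That lemma only gives equality of dimensions; your biorthogonality argument supplies the missing linear independence.
\item Your hypothesis that the nodes are distinct modulo \(2\pi\) is genuinely necessary. The paper allows \(t_i\in[0,2\pi]\), and if both \(0\) and \(2\pi\) are nodes, every element of \(\mathbb{T}_n\) takes equal values there. Existence then fails whenever \(\alpha_1\neq\alpha_{2n+1}\).
\end{itemize}
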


Formula \eqref{eq-trigLagr} is analogous to the Lagrange interpolation formula of Proposition \ref{prop-Lagrpoly} and provides a way to solve the problem from \cite{zou25triginterpol} without requiring the grid to be uniform, unlike the assumptions made in the cited work.

\subsection{Different Sensors: the Scheme \texorpdfstring{$\{t_1,t_2 \mid t_3 \mid t_4 \mid *\}$}{t1,t2 | t3 | t4 | *}}\label{sec-diffdatchyk}

In modeling motion, it is quite natural to encounter situations in which position, velocity, and acceleration are measured by different sensors. Such problems have been actively studied in recent years; see, for example, \cite{2024-ot,Diveev2025,MakarovMorozov2025}.

When working with real data, synchronization of sensor readings may introduce additional errors, which are not always possible to control. Thus, it is reasonable to consider an interpolation method that does not assume perfect synchronization between sensor measurements.

We choose as a basis of linear functionals
\[
    \langle \delta^0_{t_1}, \delta^0_{t_2};\, \delta^1_{t_3},\, \delta^2_{t_4} \rangle.
\]

As interpolating functions we take polynomials of degree at most three, with the monomial basis
\(\langle 1, t, t^2, t^3\rangle\). Then the matrix from Theorem \ref{th-basiscalc} becomes
\[
    \mathbf{A}_p := 
    \begin{pmatrix}
        1 & t_1 & t_1^2 & t_1^3\\
        1 & t_2 & t_2^2 & t_2^3\\
        0 & 1   & 2t_3  & 3t_3^2\\
        0 & 0   & 2     & 6t_4
    \end{pmatrix}.
\]

As is clear, the determinant vanishes when \(t_1=t_2\), but this is not the only degenerate case. A direct computation gives
\begin{equation}\label{eq-detAp}
|\mathbf{A}_p| = -2(t_1-t_2)\bigl(t_1^2+t_1t_2+t_2^2 - 3t_4(t_1+t_2+2t_3) - 3t_3^2 \bigr).
\end{equation}

Computing the matrix \(|\mathbf{A}_p|\cdot \mathbf{A}_p^{-1}\) yields the following columns, and hence the corresponding basis polynomials (each column is to be divided by \(|\mathbf{A}_p|\)):

\[
    \begin{pmatrix}
        2 t_2\!\left(t_2^2-3 t_2 t_4-3 t_3^2+6 t_3 t_4\right) \\
        6 t_3\!\left(t_3-2 t_4\right)\\
        6 t_4\\
        -2
    \end{pmatrix}
    \mapsto
\]
\[
   \frac{
        2 t_2\!\left(t_2^2-3 t_2 t_4-3 t_3^2+6 t_3 t_4\right) +
        6 t_3\!\left(t_3-2 t_4\right)t +
        6 t_4 t^2 
        - 2 t^3
   }{
        -2(t_1-t_2)\bigl(t_1^2+t_1t_2+t_2^2 -3t_4(t_1+t_2+2t_3) - 3 t_3^2\bigr)
   }
   =: g_1(t)
\]

\[
    \begin{pmatrix}
        -2 t_1\!\left(t_1^2-3 t_1 t_4-3 t_3^2+6 t_3 t_4\right)\\
        -6 t_3\!\left(t_3-2 t_4\right)\\
        -6 t_4\\
        2
    \end{pmatrix}
    \mapsto
\]
\[
   \frac{
        -2 t_1\!\left(t_1^2-3 t_1 t_4-3 t_3^2+6 t_3 t_4\right)
        - 6 t_3\!\left(t_3-2 t_4\right)t
        - 6 t_4 t^2
        + 2 t^3
   }{
        -2(t_1-t_2)\bigl(t_1^2+t_1t_2+t_2^2 -3t_4(t_1+t_2+2t_3) - 3 t_3^2\bigr)
   }
   =: g_2(t)
\]

In the next two columns we cancel the common factor \((t_1 - t_2)\) in numerator and denominator:

\[
\begin{pmatrix}
    2 t_1 t_2 (t_1-t_2)(t_1+t_2-3 t_4)\\
    -2 (t_1-t_2)\!\left(t_1^2+t_1 t_2-3 t_1 t_4+t_2^2-3 t_2 t_4\right)\\
    -6 t_4 (t_1-t_2)\\
    2 (t_1-t_2)
\end{pmatrix}
\mapsto
\]
\[
-\frac{
    t_1 t_2 (t_1+t_2-3t_4)
    - \bigl(t_1^2+t_1 t_2-3 t_1 t_4+t_2^2-3 t_2 t_4\bigr)t
    - 3 t_4 t^2 + t^3
}{
    t_1^2+t_1t_2+t_2^2 - 3t_4(t_1+t_2+2t_3) - 3 t_3^2
}
=: g_3(t)
\]

\[
\begin{pmatrix}
    t_1 t_2 (t_1-t_2)\!\left(t_1 t_2-2 t_1 t_3-2 t_2 t_3+3 t_3^2\right)\\
    t_3 (t_1-t_2)\!\left(2 t_1^2+2 t_1 t_2-3 t_1 t_3+2 t_2^2-3 t_2 t_3\right)\\
    -(t_1-t_2)\!\left(t_1^2+t_1 t_2+t_2^2-3 t_3^2\right)\\
    (t_1-t_2)\!\left(t_1+t_2-2 t_3\right)
\end{pmatrix}
\mapsto
\]

{\small
\begin{multline*}
 \frac{
    -t_1 t_2\!\left(t_1 t_2-2 t_1 t_3-2 t_2 t_3+3 t_3^2\right)
 }{
    2\bigl(t_1^2+t_1t_2+t_2^2 -3t_4(t_1+t_2+2t_3) - 3 t_3^2\bigr)
 }
 - \frac{
    t_3\!\left(2 t_1^2+2 t_1 t_2-3 t_1 t_3+2 t_2^2-3 t_2 t_3\right)t
 }{
    2\bigl(t_1^2+t_1t_2+t_2^2 -3t_4(t_1+t_2+2t_3) - 3 t_3^2\bigr)
 } \\
 + \frac{
    \left(t_1^2+t_1 t_2+t_2^2 - 3 t_3^2\right)t^2
 }{
    2\bigl(t_1^2+t_1t_2+t_2^2 -3t_4(t_1+t_2+2t_3) - 3 t_3^2\bigr)
 }
 - \frac{
    (t_1+t_2-2 t_3)t^3
 }{
    2\bigl(t_1^2+t_1t_2+t_2^2 -3t_4(t_1+t_2+2t_3) - 3 t_3^2\bigr)
 }
 =: g_4(t).
\end{multline*}
}

We now combine all results.

\begin{proposition}\label{prop-datchiki}
For the pointwise interpolation scheme
\[
 \{t_1,t_2 \mid t_3 \mid t_4 \mid \mathbb{R}_{3}[t]\},
\]
if the points \(t_1,t_2,t_3,t_4\) satisfy
\[
-2(t_1-t_2)\bigl(t_1^2+t_1t_2+t_2^2 -3t_4(t_1+t_2+2t_3) - 3t_3^2\bigr) \neq 0,
\]
then there exists a unique cubic polynomial \(p_*(t)\in\mathbb{R}_3[t]\) solving the scheme.
\end{proposition}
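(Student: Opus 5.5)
The plan is to reduce Proposition~\ref{prop-datchiki} to Theorem~\ref{th-schemegeneral}, using Theorem~\ref{th-basiscalc} for the explicit form. Take $F=\mathbb{R}_3[t]$, so $\dim F=4$, which matches the $N=4$ data of the scheme. The associated functionals are $\delta^0_{t_1},\delta^0_{t_2},\delta^1_{t_3},\delta^2_{t_4}$. It therefore suffices to show that these four functionals form a basis of $\mathbb{R}_3^*[t]$.

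First, compute the coordinate rows of these functionals in the basis dual to the monomial basis $\langle 1,t,t^2,t^3\rangle$, as in Proposition~\ref{prop-monombas} and formula~\eqref{eq-funct-string}. Each coordinate row is the row of values $(\varphi(1),\varphi(t),\varphi(t^2),\varphi(t^3))$. Stacking these rows gives exactly the matrix $\mathbf{A}_p$. Four vectors in the $4$-dimensional space $\mathbb{R}_3^*[t]$ (Lemma~\ref{lemmaV=V*}) form a basis if and only if their coordinate matrix is nonsingular, so the claim reduces to showing $\det\mathbf{A}_p\neq 0$.

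The only real step is verifying the determinant formula~\eqref{eq-detAp}. I would expand along the last row $(0,0,2,6t_4)$:
\[
\det\mathbf{A}_p = 2M_{43}\cdot(-1)^{4+3} + 6t_4 M_{44},
\]
with both $3\times 3$ minors taken from the first three rows. Each minor contains the block $\begin{pmatrix}1&t_1\\1&t_2\end{pmatrix}$-type structure, so after subtracting row one from row two every term carries the factor $(t_2-t_1)$. This produces $(t_1-t_2)$ times a quadratic expression in $t_1,t_2,t_3$, with the $t_4$-terms coming from $M_{44}$. Collecting terms should give
\[
-2(t_1-t_2)\bigl(t_1^2+t_1t_2+t_2^2-3t_4(t_1+t_2+2t_3)-3t_3^2\bigr).
\]
I expect this bookkeeping to be the main obstacle, though it is routine. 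To check it, I would confirm the answer in special cases: $t_1=t_2$ must give zero, and setting $t_3=t_4=0$ should reduce the determinant to $-2(t_1-t_2)(t_1^2+t_1t_2+t_2^2)$.

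Given the hypothesis, the determinant is nonzero, so the functionals form a basis. Theorem~\ref{th-schemegeneral} then yields existence and uniqueness of $p_*\in\mathbb{R}_3[t]$. By Theorem~\ref{th-basiscalc}, the dual basis consists of the polynomials whose coefficients are the columns of $\mathbf{A}_p^{-1}$, namely $g_1,\dots,g_4$. Hence, by the corollary to Theorem~\ref{th-schemegeneral},
\[
p_*=\alpha_1g_1+\alpha_2g_2+\alpha_3g_3+\alpha_4g_4.
\]
Uniqueness can also be seen directly: a difference of two solutions lies in the kernel of the nonsingular linear map $p\mapsto(\delta^0_{t_1}p,\delta^0_{t_2}p,\delta^1_{t_3}p,\delta^2_{t_4}p)$, so it is zero.
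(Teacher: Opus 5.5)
Your reduction is the same as the paper's, which also forms $\mathbf{A}_p$, asserts \eqref{eq-detAp} ``by direct computation'', and then invokes Theorems~\ref{th-schemegeneral} and~\ref{th-basiscalc}. The gap is the step you defer as routine bookkeeping. The identity \eqref{eq-detAp} is false, so ``given the hypothesis, the determinant is nonzero'' does not follow. Carrying out your own plan (subtract row~1 from row~2, expand along the first column, factor out $t_2-t_1$) gives
\[
\det\mathbf{A}_p=(t_2-t_1)\det\begin{pmatrix}1&t_1+t_2&t_1^2+t_1t_2+t_2^2\\ 1&2t_3&3t_3^2\\ 0&2&6t_4\end{pmatrix}
=-2(t_1-t_2)\bigl(t_1^2+t_1t_2+t_2^2-3t_4(t_1+t_2-2t_3)-3t_3^2\bigr).
\]
The $t_4$-term contains $-2t_3$, not $+2t_3$. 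Your last-row expansion gives the same result, since $M_{44}=(t_1-t_2)(t_1+t_2-2t_3)$. The true and claimed expressions differ by $24(t_2-t_1)t_3t_4$. That is exactly why both of your sanity checks ($t_1=t_2$, and $t_3=t_4=0$) cannot see the error. A generic check such as $t_1=0$, $t_2=1$, $t_3=t_4=1$ exposes it: the true value is $2$, the claimed value is $-22$.

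Consequently the proposition is false with its stated hypothesis, and no argument can close this gap. Take $t_1=0$, $t_2=1$, $t_3=1$, $t_4=2/3$. The stated expression equals $-16\neq 0$, yet $\det\mathbf{A}_p=0$.
\begin{itemize}
\item Uniqueness fails: $p(t)=t(t-1)^2$ satisfies $p(0)=p(1)=p'(1)=p''(2/3)=0$.
\item Existence fails: every cubic satisfies $-2p(0)+2p(1)-2p'(1)+p''(2/3)=0$, so the data $(\alpha_1,\alpha_2,\alpha_3,\alpha_4)=(0,0,0,1)$ has no solution in $\mathbb{R}_3[t]$.
\end{itemize}
If the hypothesis is replaced by nonvanishing of the corrected determinant, your argument goes through verbatim. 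The adjugate columns displayed in the paper are correct. However, the denominators of $g_1,\dots,g_4$ must also be the corrected determinant; with the stated one, $g_1(t_1)\neq 1$ in general.
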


In other words, given numbers \(\alpha_1,\alpha_2,\alpha_3,\alpha_4\), the polynomial \(p_*(t)\) satisfies
\[
    p_*(t_1)=\alpha_1,\qquad p_*(t_2)=\alpha_2,\qquad 
    p_*'(t_3)=\alpha_3,\qquad p_*''(t_4)=\alpha_4,
\]
and can be computed via
\[
    p_*(t) = \alpha_1 g_1(t) + \alpha_2 g_2(t) + \alpha_3 g_3(t) + \alpha_4 g_4(t).
\]

Note that for this interpolation scheme in general we obtain only a continuous function; smoothness at the nodes cannot be guaranteed.

We also emphasize that no assumption was made that \(t_3\) or \(t_4\) lie inside the interval \([t_1,t_2]\). The only requirement is nondegeneracy of the determinant \eqref{eq-detAp}.

\bigskip

Let us briefly consider interpolation by trigonometric functions using an analogous scheme:
\[
 \{t_1,t_2 \mid t_3 \mid t_4 \mid \mathbb{T}_{2}[t]\}.
\]

From Theorem \ref{th-basiscalc} we obtain the matrix
\[
    \mathbf{A}_t := 
    \begin{pmatrix}
        \sin t_1 & \sin(2t_1) & \cos t_1 & \cos(2t_1)\\
        \sin t_2 & \sin(2t_2) & \cos t_2 & \cos(2t_2)\\
        \cos t_3 & 2\cos(2t_3)& -\sin t_3 & -2\cos(2t_3)\\
        -\sin t_4 & -2\cos(2t_4)& -\cos t_4 & -2\cos(2t_4)
    \end{pmatrix}.
\]

The subsequent steps — computing the determinant, inverting the matrix, and obtaining the basis — follow exactly the same pattern as above.

\section{Conclusion}

In conclusion, we would like once more to draw the reader's attention to the key ideas and to outline directions for further application of the results obtained here.

First, the interpolation approach proposed in this paper is universal: it allows one to treat interpolation problems on arbitrary families of functions using a single unified framework. Second, it is “one–time’’ in the sense that once the approach is implemented, the resulting collection of basis functions remains fixed: for any set of data values to be interpolated, the interpolating function is always a linear combination of these basis polynomials.

The most natural direction for applying the results is a systematic solution of the problem of determining the components appearing in formula~\eqref{eq-p_*Rn} for families of basis functions that carry a ``physical’’ meaning.

Even at the present stage, the trigonometric splines constructed as examples of our method can be compared with the more traditional polynomial ones in terms of how adequately they represent the underlying physical process. For instance, in the setting of \cite{MakarovMorozov2025}, one may ask: what are the limiting values of the first and second derivatives along the trajectory described by the resulting spline? Are these values admissible from the standpoint of the modeled object (an off–road vehicle), and to what extent does the resulting trajectory conform to the physical principles governing its motion and maneuvering?

The ideas presented here can naturally be applied to many other problems in which interpolation methods arise. We have mentioned the work \cite{zou25triginterpol}, which is motivated by problems in economics. However, interpolation is required in many other well–known areas: practically every branch of robotics, control theory, and numerous fields of industrial design or computer graphics.

\subsection*{Declarations}

\begin{itemize}
	\item \textbf{Conflict of Interest}. The authors declared that they have no conflict of interest.
	\item \textbf{Funding statement}. This research was performed as part of the employment of the authors at Trapeznikov Institute of Control Science. No additional external funding was received.
	\item \textbf{Author Contributions}. The individual contributions are as follows: Andronick Arutyunov. performed the formal proof of the main theorems and contributed to writing the theoretical sections. Nikolay Korgin formulated the research problem, conducted the applied part of the work, and contributed to writing the applied sections. Both authors were jointly responsible for editing, revising, and finalizing the manuscript.
	\item \textbf{Acknowledgement}. The authors wish to express their gratitude to Yuri Morozov, Maxim Makarov, and Konstantin Kan for fruitful discussions.
\end{itemize}

\printbibliography

\end{document}